\numberwithin{equation}{section}
\setlist[enumerate]{leftmargin=1cm,topsep=1mm}
\setlist[itemize]{leftmargin=1cm,topsep=1mm}
\newcommand{\bbR}{\mathbb{R}}
\newcommand{\R}{\mathbb{R}}
\newcommand{\C}{\mathbb{C}}
\newcommand{\N}{\mathbb{N}}
\newcommand{\cA}{\mathcal{A}}
\newcommand{\cB}{\mathcal{B}}
\newcommand{\cD}{\mathcal{D}}
\newcommand{\cL}{\mathcal{L}}
\newcommand{\cW}{\mathcal{W}}
\newcommand{\fD}{\mathfrak{D}}
\newcommand{\fI}{\mathfrak{I}}
\newcommand{\norm}[2]{     \| #1       \|_{ #2 }}
\newcommand{\scalar}[2]{     \langle #1       \rangle_{ #2 }}
\renewcommand{\Re}{\operatorname{Re}}
\renewcommand{\Im}{\operatorname{Im}}
\newcommand{\rd}{\mathop{}\!\mathrm{d}}
\newcommand{\from}{\colon}
\newcommand{\clos}[1]{\overline{ #1 }} 
\newcommand{\id}{\operatorname{Id}}
\newcommand{\dom}[1]{\mathsf D(#1)}
\newcommand{\LO}{\mathscr L}
\newcommand{\Lloc}{L_{\mathrm{loc}}}
\newcommand{\Cub}{C_{\mathrm{ub}}}
\newcommand{\Cb}{C_{\mathrm{b}}}
\newtheorem{lemma}{Lemma}[section]
\newtheorem{proposition}[lemma]{Proposition}
\newtheorem{theorem}[lemma]{Theorem}
\newtheorem{corollary}[lemma]{Corollary}
\theoremstyle{remark}
\newtheorem{remark}[lemma]{Remark}
\theoremstyle{definition}
\newtheorem{definition}[lemma]{Definition}
\newtheorem{assumption}[lemma]{Assumption}
\newtheorem{example}[lemma]{Example}
\begin{document}

\author{Joshua Willems}

\address[Joshua Willems]{Delft Institute of Applied Mathematics\\
	Delft University of Technology\\
	P.O.~Box 5031 \\ 
	2600 GA Delft \\
	The Netherlands.}

\email{j.willems@tudelft.nl}


\title[Abstract nonlocal space--time Dirichlet problems]{%
Dirichlet problems associated to abstract nonlocal 
space--time differential operators}


\keywords{%
	Nonlocal space--time differential operator, Dirichlet problem,
	strongly measurable semigroup, mild solution, extension operator%
}

\subjclass[2020]{Primary: 
	35R11, 
	35E15; 
	secondary:
	47D06, 
	47A60. 
}  

\date{}

\begin{abstract}
	Let the abstract fractional space--time operator $(\partial_t + A)^s$ be given,	where $s \in (0,\infty)$ and $-A \colon \mathsf{D}(A) \subseteq X \to X$ is a linear operator generating a uniformly bounded strongly measurable semigroup $(S(t))_{t\ge0}$ on a complex Banach space $X$.
	We consider the corresponding  Dirichlet problem of finding	$u \colon \mathbb{R} \to X$ such that 
	\[
	\left\lbrace 
	\begin{aligned}
		(\partial_t + A)^s u(t) &= 0, &  t &\in (t_0, \infty), \\
		u(t) &= g(t), &  t &\in (-\infty, t_0],
	\end{aligned}
	\right.
	\]
	for given $t_0 \in \mathbb{R}$ and $g \colon (-\infty,t_0] \to X$. 
	We define the concept of $L^p$-solutions, to which we associate a mild solution formula
	which expresses $u$ in terms of $g$ and $(S(t))_{t\ge0}$ and generalizes the well-known variation of constants formula for the mild solution to the	abstract Cauchy problem $u' + Au = 0$ on $(t_0, \infty)$ with $u(t_0) = x \in \overline{\mathsf{D}(A)}$. 
	Moreover, we include a comparison to analogous solution concepts arising from Riemann--Liouville and Caputo type initial value problems.
\end{abstract}

\maketitle


\section{Introduction}
\label{section:intro}

\subsection{Background and motivation}
\label{section:intro:background-motivation}

Space--time nonlocal problems involving fractional powers of a parabolic
operator arise in physics, biology, probability theory and statistics. 
The flat parabolic Signorini problem and certain models for semipermeable
membranes can be formulated as obstacle problems for the fractional heat
operator $(\partial_t - \Delta)^s$, where $s \in (0,1)$ and $\Delta$ denotes 
the Laplacian, acting on
functions $u \from J \times \cD \to \R$ for a given time interval 
$J \subseteq \bbR$
and a connected non-empty open spatial domain $\cD \subseteq \R^d$,
see e.g.~\cite{AthanasopoulosCaffarelliMilakis2018, Stinga2017}. 
In the context of
continuous time random walks, equations of the form 
$(\partial_t - \Delta)^s u = f$
for $f \from J \times \cD \to \R$
are considered examples of \emph{master equations} governing the (non-separable)
joint probability
distribution of jump lengths and waiting times~\cite{CaffarelliSilvestre2014}.
The case where $f$ is replaced by
spatiotemporal Gaussian noise $\dot \cW$ has applications to the statistical
modeling of spatial and temporal dependence in data: The resulting class of 
fractional 
parabolic \emph{stochastic} partial differential equations (SPDEs) has 
been proposed and
analyzed in~\cite{KW2023, LBBKR2024} as a spatiotemporal
generalization of the \emph{SPDE approach} to spatial statistical
modeling,
which was initiated by Lindgren, Rue and
Lindstr\"om~\cite{LindgrenRueLindstroem2011}
and has subsequently gained widespread popularity~\cite{LindgrenBolinRue2022}. 

After \cite{Stinga2017} and~\cite{NystroemSande2016} independently generalized
the Caffarelli--Silvestre extension approach from the fractional elliptic to the
parabolic setting, there has been a surge of literature on
space--time nonlocal problems involving fractional powers of $\partial_t + L$
for more general elliptic operators $L$ acting on functions $u \from \cD \to
\R$, see for instance~\cite{BanerjeeGarofalo2018, BanerjeeGarofalo2023,
	BiswasDLCStinga2021, BiswasStinga2021, FjellstroemNystromWang2023,
	LaiLinRueland2020, LitsgardNystroem2023}. In 
particular, in~\cite[Remark~1.2]{Stinga2017}, the
\emph{natural Dirichlet problem
for the nonlocal space--time operator $(\partial_t + L)^s$}, given 
by
\begin{equation}\label{eq:natural-Dirichlet-problem-xt}
	\left\lbrace
	\begin{aligned}
		(\partial_t + L)^s u(t, x)
		&=
		f(t,x), &
		&(t,x) \in J \times \cD,
		\\
		u(t,x) &= g(t, x), 
		&&(t,x) \in \R^{d+1} \setminus (J \times \cD),
	\end{aligned}
	\right.
\end{equation}
is introduced,
where $g \from \R^{d+1} \setminus (J \times \cD) \to \R$ is a given function
prescribing the values of $u$ outside of the spatiotemporal region 
$J \times\cD$. 
The definition of $(\partial_t + L)^s$, given in 
Section~\ref{section:prelims:frac-par-calc} below, generalizes that of the 
Riemann--Liouville fractional time derivative $\partial_t^s$ (i.e., the case 
where $L = 0$) using the theory of semigroups.
Equations 
involving only a fractional time derivative have been studied widely; see for 
instance the monographs~\cite{KST2006, SKM1993, Podlubny1999, GalWarma2020} for 
an 
introduction to the subject.

In the integer-order case, the space--time differential operator is 
local, so 
that the analog to~\eqref{eq:natural-Dirichlet-problem-xt} is
an initial boundary value problem. Identifying any $u \from J \times \cD 
\to 
\bbR$ with $u \from J \to
X$, where $J \coloneqq (t_0, \infty)$ and $X$ is a Banach space to be 
thought of as containing functions from 
$\cD$ 
to 
$\R$, the corresponding infinite-dimensional initial value problem for $s 
= 1$ is the \emph{abstract Cauchy problem}
\begin{equation}\label{eq:ACP}
	\left\lbrace
	\begin{aligned}
		(\partial_t + A) u(t)
		&=
		f(t), 
		\quad
		t \in J,
		\\
		u(t_0) &= x \in X.
	\end{aligned}
	\right.
\end{equation}
Here, $A \from \dom{A} \subseteq X \to X$ is a linear operator, whose domain
$\dom{A}$ can be used to encode (Dirichlet) boundary conditions, and $f \from J 
\to X$ is a given forcing function. 
Although there exist various definitions of solutions to~\eqref{eq:ACP} 
(e.g., mild, strong and $L^p$-solutions, see Section~\ref{sec:11st-order-ACP}), 
the main focus of this article is 
on mild solutions. The mild solution to~\eqref{eq:ACP} can only be defined
under the assumption that $-A$ is the infinitesimal generator of a suitably 
regular semigroup $(S(t))_{t\ge0}$ of bounded linear operators on $X$, 
see Section~\ref{sec:prelims:semigroups}.
Intuitively, the relation between $(S(t))_{t\ge0}$ and $A$ can be viewed as a 
generalization of the way in which the matrix exponential $S(t) = e^{-tA}$ is 
associated to a matrix $A$.
If, moreover, the right-hand side $f$ is sufficiently (Bochner) integrable, 
then the \emph{mild solution} of~\eqref{eq:ACP} is defined by
\begin{equation}\label{eq:variation-of-constants}
	u(t) \coloneqq S(t - t_0)x + \int_{t_0}^t S(t-\tau) f(\tau) \rd \tau, 
	\quad 
	t \in J,
\end{equation}
which is commonly known as the \emph{variation of constants formula}, 
again by analogy with the finite-dimensional (matrix) case.

In this work we consider an abstract counterpart 
of~\eqref{eq:natural-Dirichlet-problem-xt} in the setting 
of~\eqref{eq:ACP}, namely the following Dirichlet problem
for $(\partial_t + A)^s$ with
$s \in (0,\infty) \setminus \N$:
\begin{equation}\label{eq:natural-Dirichlet-abstract}
	\left\lbrace
	\begin{aligned}
		(\partial_t + A)^s u(t)
		&=
		0, &
		&t \in (t_0, \infty),
		\\
		u(t) &= g(t), 
		&& t \in (-\infty, t_0],
	\end{aligned}
	\right.
\end{equation}
where $g \from (-\infty, t_0] \to X$. We restrict ourselves to $J = (t_0, 
\infty)$
since $(\partial_t + A)^s u(t)$ depends only on the values of $u$ 
to the left of
$t \in \R$, see Section~\ref{section:prelims:frac-par-calc} below.
Moreover, we only consider $f \equiv 0$ since the problem is linear in 
$u$
and the mild solution formula for $f \not\equiv 0$ and $g \equiv 0$ (or 
$J = \R$) is known to be given by a Riemann--Liouville type fractional 
parabolic integral, 
cf.~\cite[Theorem~1.17]{Stinga2017}. 
We will define the concept of an \emph{$L^p$-solution} to~\eqref{eq:natural-Dirichlet-abstract},
and show that it can be expressed in terms of $g$ and 
$(S(t))_{t\ge0}$ in the following way:
\begin{equation}\label{eq:mildsol-Dirichlet}
	\begin{aligned}
		u(t)
		\coloneqq
		{}&
		\frac{\sin(\pi \{s\})}{\pi}
		\int_0^\infty 
		\frac{\tau^{-\{s\}}}{\tau + 1} S((t-t_0)(\tau + 1)) g(t_0 - (t-t_0)\tau)
		\rd \tau
		\\&+
		\sum_{k=1}^{\lfloor s \rfloor}
		\frac{(t-t_0)^{\{s\} + k - 1}}{\Gamma(\{s\} + k)}
		S(t-t_0) 
		[(\partial_t + A)^{\{s\} + k - 1} g](t_0), 
		\quad 
		t \in (t_0, \infty),
	\end{aligned}
\end{equation}
where $\Gamma$ denotes the gamma function and
$s = \lfloor s \rfloor + \{s\}$ for $\lfloor s \rfloor \in \N_0$ and 
${\{s\} \in (0,1)}$.
This formula
generalizes~\eqref{eq:variation-of-constants} to fractional orders, and is 
therefore taken as the definition of the \emph{mild solution} 
to~\eqref{eq:natural-Dirichlet-abstract}.

\subsection{Contributions}
\label{sec:prelims:contr}

The main contribution of this work is the 
introduction and motivation of~\eqref{eq:mildsol-Dirichlet} as the
definition of the mild solution to~\eqref{eq:natural-Dirichlet-abstract}
for $s \in (0,\infty) \setminus \N$ and bounded continuous $g$, rigorously 
formulated in Definition~\ref{def:mildsol-Dirichlet}.
This definition is motivated by Theorem~\ref{thm:Lp-implies-mild}, which 
shows that \emph{$L^p$-solutions} to~\eqref{eq:natural-Dirichlet-abstract} 
satisfy~\eqref{eq:mildsol-Dirichlet} under certain natural conditions.
Although its proof relies on the uniform exponential 
stability of 
$(S(t))_{t\ge0}$, the resulting formula is well-defined under the more general 
assumption 
that $(S(t))_{t\ge0}$ is uniformly bounded. In particular, this includes the 
case 
$A = 0$, 
meaning that~\eqref{eq:mildsol-Dirichlet} with $S(\,\cdot\,) \equiv \id_X$ 
can be 
viewed as 
a solution to the 
Dirichlet problem associated to the fractional time derivative 
$\partial_t^s$. 
Likewise, if 
$(S(t))_{t\ge0}$ is uniformly exponentially stable, then the integral 
in~\eqref{eq:mildsol-Dirichlet} also converges for $\{s\} = 0$, so 
that~\eqref{eq:mildsol-Dirichlet} remains meaningful for integers 
$s = n \in 
\N$ and reduces to the integer-order solution formula:
\begin{equation}
	u(t) = \sum_{k=0}^{n-1}
	\frac{(t-t_0)^{k}}{k!}
	S(t-t_0) 
	[(\partial_t + A)^{k} g](t_0), 
	\quad 
	t \in (t_0, \infty).
\end{equation}
If $(S(t))_{t\ge0}$ is merely uniformly bounded, then we can still show that 
the 
first term 
of~\eqref{eq:mildsol-Dirichlet} converges to $S(t - t_0)g(t_0)$
as $\{s\} \uparrow 1$ for all 
$t \in (t_0, \infty)$, see Proposition~\ref{prop:reduction}.
For constant initial data $g \equiv x \in X$, we find 
that~\eqref{eq:mildsol-Dirichlet} can be conveniently expressed in terms of an 
operator-valued incomplete 
gamma 
function, see Corollary~\ref{cor:g-equiv-x}.

In addition to~\eqref{eq:mildsol-Dirichlet}, we define solution concepts for the
Cauchy problems associated to fractional parabolic 
Riemann--Liouville and Caputo type derivative operators
(see Proposition~\ref{prop:Int-Deriv-gamma-RL} and Definitions~\ref{def:RL-ACP} 
and~\ref{def:C-ACP}) for comparison.
The 
higher-order terms comprising the summation in~\eqref{eq:mildsol-Dirichlet}
turn out to 
be analogous to the corresponding terms in the Riemann--Liouville solution.
The integral term in~\eqref{eq:mildsol-Dirichlet}, however, is continuous at 
$t_0$ under mild 
conditions on $(S(t))_{t\ge0}$ or $g$, in contrast to the lowest-order term
in the Riemann--Liouville formula, which has a singularity there. As opposed to 
the Caputo type initial value problem, the solutions 
to~\eqref{eq:natural-Dirichlet-abstract} 
are in general different for distinct $s_1, s_2 \in (n, n+1)$ for $n \in \N_0$.

To the best of the author's knowledge, the solution formula given 
by~\eqref{eq:mildsol-Dirichlet} is
new even in the scalar-valued case $X \coloneqq \C$,
$A \coloneqq a \in \clos \C_+$ and
$(S(t))_{t\ge0} = (e^{-at})_{t\ge0}$, as are the Riemann--Liouville and 
Caputo type solutions for $a \in \clos\C_+ \setminus \{0\}$.

\subsection{Outline}
\label{section:intro:outline}
This article is structured as follows. In Section~\ref{section:prelims},
we establish some notation and collect preliminary results regarding 
semigroups, fractional calculus, first-order abstract Cauchy problems and
the Phillips functional calculus associated to semigroup generators. 
These notions are first used in Section~\ref{section:frac-prob-R} to 
investigate 
problem~\eqref{eq:natural-Dirichlet-abstract} for $t_0 = -\infty$, i.e., in the 
absence of prescribed initial data.
Section~\ref{sec:Dirichlet-frac-parab} is concerned with the 
rigorous definition of mild and $L^p$-solutions 
to~\eqref{eq:natural-Dirichlet-abstract}; after establishing the relation 
between these two concepts, we focus on the mild solution and establish some of 
its most important properties. The comparison with the solution 
concepts 
associated to 
Riemann--Liouville and 
Caputo type initial value problems is presented in 
Section~\ref{section:RL-C-ACP}.

\section{Preliminaries}
\label{section:prelims}

\subsection{Notation}

Let $\N \coloneqq \{1, 2, 3, \dots \}$ and $\N_0 \coloneqq \N \cup \{0\}$ denote
the sets of positive and non-negative integers, respectively. 
We write $\lfloor \,\cdot\, \rfloor$ and $\lceil \,\cdot\, \rceil$ for the 
floor 
and ceiling functions; the fractional part of $\alpha \in [0,\infty)$ is 
defined by $\{\alpha\} \coloneqq \alpha - \lfloor \alpha \rfloor$. 
The maximum (respectively, minimum) of two real numbers $\alpha,\beta\in\R$ is 
denoted 
by $\alpha \vee \beta$ (respectively, $\alpha \wedge \beta$).
The function $t \mapsto t^\beta_+$ is defined by $t^\beta_+ \coloneqq 
t^\beta$ 
if $t \in (0,\infty)$ and $t^\beta_+ \coloneqq 0$ otherwise.
The real and 
imaginary parts of a complex number $z \in \C$ are respectively denoted by 
$\Re{z}$ and $\Im{z}$; if $z \neq 0$, then its argument is written as $\arg z 
\in (-\pi, \pi]$. The open and closed right half-planes of the complex 
plane are denoted by 
\[
\C_+ \coloneqq \{ z \in \C : \Re z > 0\}
\quad 
\text{and} 
\quad 
\clos \C_+ \coloneqq \{ z \in \C : \Re z \ge 0\},
\]
respectively.
The identity map on a set $B$ is denoted by $\id_B \from B
\to B$ and we write $\mathbf 1_{B_0} \from B \to \{0,1\}$ for the indicator
function of a subset $B_0 \subseteq B$.

Throughout this article, $(X, \norm{\,\cdot\,}{X})$ denotes a Banach
space over the complex scalar field $\C$; the real case can be treated using 
complexifications.
The Banach space of bounded linear operators from $X$ to a Banach space $Y$ 
with the uniform operator
norm is denoted by $\LO(X; Y)$; for $X = Y$ we set $\LO(X) \coloneqq \LO(X; X)$.
The notation $A \from \dom{A} \subseteq X
\to X$ indicates that $A$ is a (possibly unbounded) linear operator on $X$ with
domain $\dom{A}$ and graph
$\mathsf G(A) \coloneqq \{(x, Ax) : x \in \dom{A}\}$. 

Let $(S, \mathscr{A} \!, \mu)$ be a measure space. For $f \from S \to \R$ 
and $x 
\in X$, define ${f \otimes x \from S \to X}$ by $[f \otimes x](s) 
\coloneqq 
f(s)x$. A function $f \from S \to X$ is said to be strongly $\mu$-measurable if 
it is the $\mu$-almost everywhere (``a.e.'')\ limit of 
$\mu$-simple functions, i.e., linear combinations of
$\mathbf 1_B \otimes x$ with $B \in \mathscr{A}, \mu(B) < \infty$ and $x \in 
X$. For 
$p \in 
[1,\infty]$, let $L^p(S; X)$ denote the Bochner space of (equivalence classes 
of) $p$-integrable functions with norm 
\[
\norm{f}{L^p(S; X)} \coloneqq
\begin{cases}
	(\int_S \norm{f(s)}{X}^p \rd 
	\mu(s))^{\nicefrac{1}{p}}, &\text{if } p \in [1,\infty); \\
	\operatorname{ess\,sup}_{s \in 
		S}\norm{f(s)}{X}, &\text{if } p = \infty.
\end{cases}
\]
Intervals $J \subseteq \R$ are 
equipped with the Lebesgue $\sigma$-algebra and measure. The Banach space of 
bounded 
continuous functions $u \from J \to X$, endowed with the supremum norm, is 
denoted 
by 
$(\Cb(J; X), \norm{\,\cdot\,}{\infty})$.

\subsection{Strongly measurable semigroups of bounded linear operators}
\label{sec:prelims:semigroups}

A family $(S(t))_{t\ge0}$ of bounded linear operators on a complex Banach 
space 
$X$ is said to be a \emph{semigroup} if $S(0) = \id_X$ and, 
for all $t, s \ge 0$, we have $S(t + s) = S(t)S(s)$. It is called a 
\emph{strongly measurable semigroup} if, in addition, the orbit 
$t \mapsto S(t)x$ of any $x \in X$ is a strongly measurable mapping from 
$[0,\infty)$ to $X$. For a strongly measurable semigroup, it holds that
$t \mapsto 
S(t)x$ is 
continuous on $(0, \infty)$ for every $x \in X$, with continuity at zero 
if and only if $x \in \clos{\dom{A}}$. In what follows, we exclusively 
consider \emph{locally 
	bounded} strongly measurable semigroups, which satisfy
\begin{equation}\label{eq:semigroup-est}
	\exists M_0 \in [1,\infty), \, w \in \R : 
	\quad 
	\norm{S(t)}{\LO(X)} 
	\le 
	M_0 e^{-wt}, \quad \forall t \in [0,\infty).
\end{equation}

Analogously to the relation between a matrix $G$ and its corresponding 
matrix 
exponential function $S(t) \coloneqq e^{tG}$, we can more generally associate 
an \emph{infinitesimal generator} $G \from \dom{G} \subseteq X \to X$ to any
locally bounded strongly measurable semigroup $(S(t))_{t\ge0}$. It is defined 
by the following 
property: For all $\lambda \in \C$ such that $\Re \lambda > w$, the operator
$\lambda \id_X-G$ admits a bounded inverse, and we have
\[
\scalar{(\lambda \id_X - G)^{-1}x, x^*}{}
=
\int_0^\infty e^{-\lambda t} \scalar{S(t)x, x^*}{}\rd t
\quad 
\text{for all } x \in X \text{ and } x^* \in X^*,
\]
where $\scalar{\,\cdot\, , \,\cdot\,}{}$ denotes the duality pairing 
between 
$X$ and its dual $X^*$.
We refer the reader to~\cite[Appendix~K]{HvNVWVolumeIII}
for a more detailed summary of the theory of (strongly) measurable semigroups.

The above notions will be applied to the operator $A$ 
from~\eqref{eq:natural-Dirichlet-abstract} on $X$, on which we impose the 
following standing assumptions:
\begin{assumption}\label{ass:bdd-semigroup}
	Let $-A \from \dom{A} \subseteq X \to X$ be the infinitesimal generator of 
	a locally bounded strongly measurable semigroup 
	$(S(t))_{t\ge0} \subseteq \LO(X)$, which 
	satisfies~\eqref{eq:semigroup-est}.
	More precisely, we suppose that $(S(t))_{t \ge 0}$ is either
	\begin{enumerate}[leftmargin=1cm, label=(\roman*)]
		\item \label{ass:bdd-semigroup:bdd}
		\emph{uniformly bounded}, meaning that $w \in [0,\infty)$, or 
		\item \label{ass:bdd-semigroup:exp}
		\emph{uniformly exponentially stable}, meaning that 
		$w \in (0,\infty)$.
	\end{enumerate}
	We may sometimes additionally assume that $(S(t))_{t\ge0}$ is
	\begin{enumerate}[leftmargin=1cm, label=(\roman*)]
		\setcounter{enumi}{2}
		\item \label{ass:bdd-semigroup:ana}
		\emph{bounded analytic}, i.e.,
		$(0,\infty) \ni t \mapsto S(t) \in \LO(X)$ admits a bounded holomorphic 
		extension to
		$\Sigma_\varphi \coloneqq \{ z \in \C : \lvert\arg z\rvert < \varphi \}$
		for some $\varphi \in (0, \frac{1}{2}\pi)$.
	\end{enumerate}
\end{assumption}
%

\subsection{Solution concepts for first-order abstract Cauchy problems}
\label{sec:11st-order-ACP}
In this section we define and relate the concepts of strong, mild and 
$L^p$-solutions to the first-order Cauchy problem~\eqref{eq:ACP} on an interval 
$J = (t_0, \infty)$, where $t_0 \in [-\infty, \infty)$. For a more 
detailed reference, see~\cite[Chapter~17]{HvNVWVolumeIII}.

Let $\Lloc^1(\clos J; X)$ denote the space of strongly measurable functions 
from $\clos J$ to $X$ which are integrable on every compact subset of $\clos J$.
For $b < a$ we set $\int_a^b \coloneqq -\int_b^a$.

\begin{definition}[Strong solution]\label{def:ACP-strong-sol}
	A strongly measurable function $u \from J \to X$ is said to be a 
	\emph{strong solution} of~\eqref{eq:ACP} associated with 
	$f \in \Lloc^1(\overline J; X)$ if 
	\begin{enumerate}[(i)]
		\item $u(t) \in \dom{A}$ for almost all $t \in J$ and 
		$t \mapsto Au(t) \in
		\Lloc^1(\overline J; X)$;
		\item for almost all $t \in J$ we have
		\begin{equation}\label{eq:def-ACP-strong-sol}
			\begin{aligned}
				u(t) + \int_{t_0}^t Au(\tau) \rd \tau &= x
				+
				\int_{t_0}^t f(\tau)\rd \tau 
				\quad
				&&\text{if } t_0 \in \R; \\
				u(t) + \int_{0}^t Au(\tau) \rd \tau &= 
				u(0) + \int_{0}^t f(\tau)\rd \tau 
				\quad
				&&\text{if } t_0 = -\infty.
			\end{aligned}
		\end{equation}
	\end{enumerate}
\end{definition}
As antiderivatives of locally integrable functions are continuous,
see~\cite[Proposition~2.5.9]{HvNVWVolumeI}, it follows that any strong solution
admits
a continuous representative, so that the pointwise evaluation of $u$
in~\eqref{eq:def-ACP-strong-sol} is meaningful. In fact, identifying $u$ with
this representative, it turns out to be (classically) differentiable for 
almost
all $t \in J$, where it holds that 
$
u'(t) + Au(t) = f(t),
$
see~\cite[Equation~(17.3)]{HvNVWVolumeIII}. This implies $u' = f - Au 
\in 
\Lloc^1(\clos J; X)$, hence $u$ is weakly differentiable and its 
weak derivative $\partial_t u$ coincides a.e.\ with $u'$, 
again by~\cite[Proposition~2.5.9]{HvNVWVolumeI}.

Next, we turn to the definition of a mild solution to~\eqref{eq:ACP}. We 
will 
see in Proposition~\ref{prop:ACP-strong-vs-mild} below that mild solutions 
are 
weaker than strong solutions.

\begin{definition}[Mild solution]\label{def:ACP-mild-sol} 
	Suppose that Assumption~\ref{ass:bdd-semigroup}\ref{ass:bdd-semigroup:exp} 
	is satisfied
	and let $J \coloneqq (t_0, \infty)$ for a given $t_0 \in [-\infty, \infty)$.
	The \emph{mild solution} to~\eqref{eq:ACP} with
	$f \in L^p(J; X)$ for $p \in [1,\infty]$
	is the
	function $u \in \Cb(\clos J; X)$ defined 
	for all $t \in \clos{J}$ by 
	\begin{equation}\label{eq:def-ACP-mild-sol}
		\begin{aligned}
			u(t)
			&\coloneqq 
			S(t - t_0)x 
			+ 
			\int_{t_0}^t S(t - \tau) f(\tau) \rd \tau
			&&\text{ if } t_0 \in \R \text{ and } x \in \clos{\dom{A}}; \\ 
			u(t)
			&\coloneqq 
			\int_{-\infty}^t S(t-\tau) f(\tau) \rd \tau
			&&\text{ if } t_0 = -\infty.
		\end{aligned}
	\end{equation}
\end{definition}
The continuity of the mild solution $u$ defined by~\eqref{eq:def-ACP-mild-sol}
follows from~\cite[Proposition~K.1.5(3)]{HvNVWVolumeIII} and
Proposition~\ref{prop:frac-par-int-props}\ref{prop:frac-par-int-props:Lp-Cub}. 
The following is a slight extension 
of~\cite[Proposition~17.1.3]{HvNVWVolumeIII} for the class of time intervals 
considered in this work.

\begin{proposition}\label{prop:ACP-strong-vs-mild}
	Suppose that Assumption~\ref{ass:bdd-semigroup}\ref{ass:bdd-semigroup:exp} 
	holds.
	Let $J \coloneqq (t_0, \infty)$ for a given $t_0 \in [-\infty, 
	\infty)$,
	$f \in L^p(J; X)$ for some ${p \in [1,\infty]}$ and $x \in \clos{\dom{A}}$
	if $t_0 \in \R$.
	Then, for every $u \in \Cub(\clos{J}; X)$, the following assertions are 
	equivalent:
	\begin{enumerate}[(a)]
		\item $u$ is a strong solution of~\eqref{eq:ACP} in the sense of 
		Definition~\ref{def:ACP-strong-sol};
		\label{prop:ACP-strong-vs-mild:a}
		\item $u$ is the mild solution of~\eqref{eq:ACP} in the sense of 
		Definition~\ref{def:ACP-mild-sol} and $u$ is (classically)
		differentiable almost everywhere with $u' \in \Lloc^1(\clos{J}; 
		X)$; 
		\label{prop:ACP-strong-vs-mild:b}
		\item $u$ is the mild solution of~\eqref{eq:ACP} in the sense of 
		Definition~\ref{def:ACP-mild-sol}, $u(t) \in \dom{A}$ for almost
		all $t \in J$ and $t \mapsto Au(t) \in \Lloc^1(\clos{J}; X)$.
		\label{prop:ACP-strong-vs-mild:c}
	\end{enumerate}
\end{proposition}
\begin{proof}
	If $t_0 \in \R$, then the required modifications of the proof
	of~\cite[Proposition~17.1.3]{HvNVWVolumeIII} are straightforward. We 
	briefly comment	on the case $t_0 = -\infty$.
	Let $\lambda \in \C$ be such that $\lambda \id_X + A$ admits a bounded
	inverse. 
	
	\ref{prop:ACP-strong-vs-mild:a}$\implies$\ref{prop:ACP-strong-vs-mild:b}:
	For $u$ as in Definition~\ref{def:ACP-strong-sol} and $t \in \R$, we define
	$v \from (-\infty, t] \to X$
	by 
	\[
	v(\tau) \coloneqq (\lambda \id_X + A)^{-1} S(t-\tau) u(\tau), \quad 
	\tau \in (-\infty, t].
	\]
	Fixing $t' < t$, and arguing as in the original proof---except for 
	integrating over 
	$(t', t)$ instead of $(0, t)$---we find
	\[
	u(t) = S(t-t') u(t') + \int_{t'}^t S(t-\tau) f(\tau) \rd \tau.
	\]
	As $t' \to -\infty$,
	the first term vanishes by $u \in \Cb(\R; X)$ and 
	Assumption~\ref{ass:bdd-semigroup}\ref{ass:bdd-semigroup:exp}, and the 
	second term 
	converges to $\int_{-\infty}^t S(t-\tau) f(\tau) \rd \tau$ by 
	dominated convergence.
	
	\ref{prop:ACP-strong-vs-mild:b}$\implies$\ref{prop:ACP-strong-vs-mild:a}
	and 
	\ref{prop:ACP-strong-vs-mild:c}$\implies$\ref{prop:ACP-strong-vs-mild:a}:
	Use the following analog
	to~\cite[Equation~(17.4)]{HvNVWVolumeIII}: 
	\begin{equation}
		\begin{aligned}
			\int_{0}^t 
			A(\lambda \id_X + A)^{-1} u(\tau)
			\rd \tau 
			=
			-(\lambda \id_X + A)^{-1} \biggl[
			u(t) - u(0)
			-
			\int_{0}^t f(\tau) \rd \tau
			\biggr].
		\end{aligned}
		\tag*{\qedhere}
	\end{equation}
\end{proof}
Finally, we introduce a stronger notion of solutions, which will later be 
used as the basis for an analogous solution concept for the fractional problem:
\begin{definition}[$L^p$-solution]\label{def:Lpsol-firstorderACP}
	Let $J \coloneqq (t_0, \infty)$ for $t_0 \in [-\infty, 
	\infty)$ and
	$f \in L^p(J; X)$ with $p \in [1,\infty]$. A strong solution $u$
	to~\eqref{eq:ACP} (in the sense of Definition~\ref{def:ACP-strong-sol}) is 
	said to be an \emph{$L^p$-solution} if $t \mapsto Au(t) \in L^p(J; X)$.
\end{definition}
%

\subsection{Fractional parabolic calculus}
\label{section:prelims:frac-par-calc}

Fractional calculus classically refers to the idea of generalizing the 
basic 
operations of calculus, namely differentiation and integration, beyond the 
integer-order cases (which simply consist of 
repeated application). For example, a fractional 
derivative $\partial_t^\alpha f$ of a sufficiently smooth given function 
$f \from \R \to \R$ should be defined for non-integer 
$\alpha \in (0,\infty)$, in such a way that
$\partial_t^{1/2} \partial_t^{1/2} f = \partial_t f$.
To this end, a common first object of study is the \emph{Riemann--Liouville 
	fractional 
	integral}, defined for suitable $f$ by 
\begin{equation}
	I^s_{\text{RL}} f(t) \coloneqq \frac{1}{\Gamma(s)}\int_{-\infty}^t 
	(t-\tau)^{s-1} 
	f(\tau) \rd \tau, 
	\quad 
	s \in (0,\infty), \quad
	t \in \R,
\end{equation}
Since this is a fractional-order generalization of 
the integration operator $I^1_{\text{RL}}$, which can itself be viewed as an 
inverse of the 
derivative $\partial_t$, we can interpret $I^s_{\text{RL}}$ as 
$\partial_t^{-s}$. This leads us to define the following fractional 
derivatives:
\begin{equation}
	[\partial_t^s]_{\text{RL}} \coloneqq \partial_t^{\lceil s \rceil} 
	I^{\lceil s \rceil - 
		s}
	\quad 
	\text{and} 
	\quad 
	[\partial_t^s]_{\text{C}} \coloneqq 
	I^{\lceil s \rceil - s}\partial_t^{\lceil s \rceil},
\end{equation}
which are known as the Riemann--Liouville and Caputo fractional 
(time) 
derivatives, respectively. For a comprehensive overview of these and other 
fractional time derivatives, as well as their application in fractional 
differential equations, we refer to the monographs~\cite{KST2006, SKM1993, 
	Podlubny1999}.

Under the assumption that $-A$ generates a semigroup $(S(t))_{t\ge0}$ 
satisfying Assumption~\ref{ass:bdd-semigroup}\ref{ass:bdd-semigroup:exp}, we 
can extend these notions to define the lesser-known fractional 
\emph{parabolic} 
integration and
differentiation operators $\fI^s$ and $\fD^s$, which are the 
rigorous
definitions of the expressions $(\partial_t + A)^{-s}$ and $(\partial_t + A)^s$
from Section~\ref{section:intro}, respectively.
For $s \in (0,\infty)$, let the integration kernel $k_s \from \R \to 
\LO(X)$
be defined 
by $k_s(\tau) \coloneqq \frac{1}{\Gamma(s)} \tau^{s-1}_+ S(\tau)$
for all $\tau \in \R$.
Given $u \from \R \to X$,
its \emph{Riemann--Liouville type fractional parabolic integral} 
$\mathfrak I^s u \from \R \to X$ \emph{of order $s$} is defined 
by
\begin{equation}\label{eq:def-Igamma}
	\mathfrak I^s u(t) 
	\coloneqq 
	k_s * u(t)
	\coloneqq
	\frac{1}{\Gamma(s)} 
	\int_0^\infty \tau^{s-1} S(\tau) u(t - \tau) \rd \tau
\end{equation}
whenever this Bochner integral
converges for a.e.\ $t \in \R$.
For $s = 0$ we set $\mathfrak I^0 \coloneqq \id_X$. 
When viewed as a linear operator, the fractional parabolic integral 
$\fI^s$ turns out to map its domain $L^p(\R;X)$ boundedly to itself for every 
$s \in [0,\infty)$. In fact, we have the following properties of $(\mathfrak 
I^s)_{s\in[0,\infty)}$, which will be used 
throughout this work:

\begin{proposition}\label{prop:frac-par-int-props}
	Suppose that 
	Assumption~\ref{ass:bdd-semigroup}\ref{ass:bdd-semigroup:exp} 
	holds. Let $p \in [1,\infty]$ and $s \in [0,\infty)$.
	The following assertions hold:
	\begin{enumerate}[leftmargin=1cm, label=(\alph*)]
		\item \label{prop:frac-par-int-props:Lp-bdd}
		$\mathfrak I^s \in \LO(L^p(\R;
		X))$ with 
		$\norm{\mathfrak I^s}{\mathscr L(L^p(\R;X))}
		\le
		\frac{M_0}{w^{s}}$.
		\item \label{prop:frac-par-int-props:Lp-Cub}
		$\mathfrak I^s \in \LO(L^p(\R; X); \Cb(\R; X))$ 
		if
		\begin{equation}\label{eq:sp-Ds-cont}
			\begin{cases}
				p = 1 &\text{and} \quad\: s \in [1,\infty), \quad \text{ or}, \\
				p \in (1,\infty) &\text{and} \quad\: s \in 
				(\nicefrac{1}{p},\infty).
			\end{cases}
		\end{equation}
		\item \label{prop:frac-par-int-props:semigroup}
		$
		\mathfrak I^{s_1} \mathfrak I^{s_2} u
		= 
		\mathfrak I^{s_1+s_2} u
		$ a.e.\ 
		for all $s_1, s_2 \in [0,\infty)$ and $u \in L^p(\R; X)$.
		\item \label{prop:frac-par-int-props:invariant-funcs}
		Given $x \in X$, if $p \in [1, \frac{1}{1-s_1})$
		and $s_1 \in (0,1)$ or $p \in [1, \infty]$ and $s_1 \in [1,\infty)$, 
		then
		$k_{s_1} \otimes x \in L^p(\R; X)$
		and $\mathfrak I^{s_2} (k_{s_1} \otimes x) = k_{s_1 + s_2} \otimes x$
		for all $s_2 \in [0,\infty)$.
	\end{enumerate}
\end{proposition}
\begin{proof}
	Estimate~\eqref{eq:semigroup-est} implies 
	$\norm{k_s}{L^1(\R; \LO(X))} \le M_0 w^{-s}$ for all $s \in (0,\infty)$, 
	so that 
	Minkowski's integral inequality~\cite[Section~A.1]{Stein1970}
	yields~\ref{prop:frac-par-int-props:Lp-bdd}.
	
	If $p' \in [1,\infty]$ is such that $\frac{1}{p} + \frac{1}{p'} = 1$
	and $u \in L^p(\R; X)$,
	then ${k_s \in L^{p'}(\R; \LO(X))}$ for $s$ as in the statement
	of~\ref{prop:frac-par-int-props:Lp-Cub}, and the result follows from 
	H\"older's 
	inequality and the continuity of translations in $L^q(\R; X)$ or 
	$L^q(\R;\LO(X))$ for $q \in [1, \infty)$.
	
	Assertions~\ref{prop:frac-par-int-props:semigroup} 
	and~\ref{prop:frac-par-int-props:invariant-funcs} follow by combining 
	the semigroup property of $(S(t))_{t\ge0}$,
	Fubini's theorem and \cite[Equation~(5.12.1)]{Olver2010}.
\end{proof}

For $p \in [1,\infty]$,
let $W^{1,p}(\R; X)$ denote the Bochner--Sobolev space consisting of functions 
$u \in L^p(\R; X)$ whose weak derivative $\partial_t u$ also belongs to
$L^p(\R; X)$. Identifying $A \from \dom{A} \subseteq X \to X$ with the 
operator 
\[
\cA \from L^p(\R;\mathsf D(A)) \subseteq L^p(\R;X) \to L^p(\R;X)
\]
defined by 
$[\cA u](\,\cdot\,) \coloneqq Au(\,\cdot\,)$, we can view 
$\partial_t + A$ as an operator on $L^p(\R;X)$ with domain 
$L^p(\R; \dom{A}) \cap W^{1,p}(\R; X)$.
In conjunction with the operators $(\mathfrak I^s)_{s\ge0}$ 
from~\eqref{eq:def-Igamma}, this leads to the definition of the
\emph{Riemann--Liouville type fractional parabolic derivative of order $s \in 
	[0,\infty)$}:
\begin{equation}\label{eq:def-RL-fracpar-deriv-R}
	\begin{aligned}
		\mathfrak D^s u &\coloneqq (\partial_t + A)^{\lceil s \rceil} 
		\mathfrak I^{\lceil s \rceil-s} u, 
		\\
		u \in \mathsf D(\mathfrak D^s) 
		&\coloneqq
		\{ u \in L^p(\R; X) 
		: 
		\mathfrak I^{\lceil s \rceil-s} u 
		\in 
		\dom{(\partial_t + A)^{\lceil s \rceil}} \}.
	\end{aligned}
\end{equation}
Note that we do not explicitly indicate the dependence of $\fD^s$ and $\fI^s$ 
on 
$p \in [1,\infty]$ in the notation, instead leaving it to be inferred from 
context. 
\begin{remark}
	While the terminology 
	``fractional parabolic'' is inspired by the case $A = -\Delta$ acting on a 
	function space such as $X = L^2(\cD)$, our 
	setting is considerably more general. 
\end{remark}
\begin{remark}\label{rem:motivation-Ds}
	Let us briefly elaborate on our choice for the 
	Riemann--Liouville type 
	operators $\fD^s$
	and $\fI^s$ as the rigorous interpretations of $(\partial_t + A)^s$
	and $(\partial_t + A)^{-s}$, respectively. Under some additional 
	assumptions on 
	$A$ 
	and $X$, it can be shown that the 
	sum operator
	$\partial_t + \cA$ admits an extension $\cB$ which is 
	\emph{sectorial}~\cite[Section~16.3]{HvNVWVolumeIII}. 
	For this class of operators, fractional powers
	can be defined as in~\cite[Section~15.2]{HvNVWVolumeIII}. 
	If, for instance, $-A$ generates an exponentially bounded \emph{strongly 
		continuous} semigroup 
	on a 
	Hilbert space $X = H$, then it holds that $\cB^{-s} = \mathfrak I^s$ for 
	all $s \in [0,\infty)$, cf.~\cite[Proposition~3.2 and 
	Equation~(3.9)]{KW2023}.
	Then~\cite[Proposition~15.1.12(2)]{HvNVWVolumeIII} implies that
	\begin{equation}
		\cB^s = \cB^{\lceil s \rceil} \cB^{s - \lceil s \rceil}
		\quad \text{with} \quad 
		\dom{\cB^s} = \{ u \in L^p(\R; X) : u \in \dom{\cB^{\lceil s \rceil 
		}}\}.
	\end{equation}
	Hence in this situation we find that 
	$\cB^s$ is an extension of $\fD^s$. In particular, if we have $\cB = 
	\partial_t + 
	\cA$ (a property which is closely related to the maximal $L^p$-regularity 
	of 
	$A$, see~\cite[Proposition~17.3.14]{HvNVWVolumeIII}), then in fact 
	$\cB^s = \fD^s$.
	
	Thus, there is a close relation between the Riemann--Liouville 
	fractional 
	parabolic operators and fractional powers of sectorial extensions of 
	$\partial_t + \cA$. We choose the former viewpoint for the sake of 
	simplicity, generality and consistency with the analogous definitions 
	for 
	$A = 0$ in fractional calculus texts such as~\cite{SKM1993, KST2006, 
		Podlubny1999, GalWarma2020}.
\end{remark}
%

\subsection{Phillips functional calculus}
\label{section:prelims:flips}
If Assumption~\ref{ass:bdd-semigroup}\ref{ass:bdd-semigroup:bdd} holds and
$f\from \clos \C_+ \to \C$
can be written as the Laplace transform of a complex 
Borel measure $\mu$ of bounded variation on $[0,\infty)$, i.e., if we have
\begin{equation}
	f(z) = \cL[\mu](z) := \int_{[0,\infty)} e^{-zs} \rd \mu(s)
\end{equation}
for all 
$z \in \clos \C_+$, then we define the operator
\begin{equation}
	f(A) 
	\coloneqq 
	\biggl[\int_{[0,\infty)} e^{-sz}\rd\mu(s)\biggr](A)  
	\coloneqq 
	\int_{[0,\infty)} S(s) \rd \mu(s) \in \LO(X).
\end{equation}
The map $f \mapsto f(A)$, called
the \emph{Phillips functional calculus} for $A$, 
is an algebra homomorphism from the space of Laplace transforms to $\mathscr 
L(X)$, see~\cite[Remark~3.3.3]{Haase2006}.

Note that $S(t) 
= 
\cL[\delta_t](A)
=
(e^{-zt})(A)$,
where $\delta_t$ denotes the Dirac measure concentrated at $t \in [0,\infty)$. 
Moreover, for any $\alpha, \varepsilon \in (0,\infty)$ we can define the 
negative 
fractional powers of $A + \varepsilon \id_X$ by 
\begin{equation}
	(A + \varepsilon \id_X)^{-\alpha} 
	\coloneqq [(z + \varepsilon)^{-\alpha}](A)
	=
	\cL\biggl[\frac{s^{\alpha-1}e^{-\varepsilon s}}{\Gamma(\alpha)} \rd 
	s\biggr](A),
\end{equation}
see~\cite[Proposition~3.3.5]{Haase2006}. 
These can be used to define $(A + \varepsilon \id_X)^\alpha$ and, in turn,
$A^\alpha \from \dom{A^\alpha} \subseteq X \to X$ in a manner which is 
consistent with other common definitions of fractional powers, 
cf.~\cite[Propositions~3.1.9 and~3.3.2]{Haase2006}. Under 
Assumption~\ref{ass:bdd-semigroup}\ref{ass:bdd-semigroup:exp}, we can also 
allow for $\varepsilon = 0$ directly in the above to define 
(see~\cite[Corollary~3.3.6]{Haase2006}):
\begin{equation}\label{eq:neg-frac-power}
	A^{-\alpha} 
	\coloneqq 
	\frac{1}{\Gamma(\alpha)} \int_0^\infty \sigma^{\alpha - 1} S(\sigma) 
	\rd \sigma \in 
	\LO(X).
\end{equation}
%

\section{Fractional-order inhomogeneous abstract Cauchy problem on \texorpdfstring{$\mathbb{R}$}{R}}
\label{section:frac-prob-R}

In this section, we consider the inhomogeneous
abstract Cauchy problem associated to the fractional operator $\fD^s$ and $f 
\in L^p(\R; X)$, where $s 
\in (0,\infty)$, $p \in [1,\infty]$ and $J = \R$:
\begin{equation}\label{eq:frac-ACP-R}
	\mathfrak D^s u(t) = f(t), \quad t \in \R.
\end{equation}
Note that we do not impose any initial data here since the problem is 
posed on 
the 
entire real line. The solution concepts for~\eqref{eq:frac-ACP-R} which we 
will define are the following 
fractional-order analogs to the notion of $L^p$-solutions and mild solutions 
(see Definitions~\ref{def:Lpsol-firstorderACP} 
and~\ref{def:ACP-mild-sol}, respectively):
\begin{definition}[$L^p$-solution]\label{def:Lpsol-ACP-R}
	Suppose that 
	Assumption~\ref{ass:bdd-semigroup}\ref{ass:bdd-semigroup:exp} 
	is satisfied. 
	Let $s \in (0,\infty)$, $p \in [1,\infty]$ and 
	$f \in L^p(\R; X)$. Then $u \in L^p(\R; X)$ is called an 
	\emph{$L^p$-solution} 
	to~\eqref{eq:frac-ACP-R} if $u \in \dom{\fD^s}$ and equation~\eqref{eq:frac-ACP-R} holds almost everywhere on $\R$.
\end{definition}
It is a consequence of Proposition~\ref{prop:ID-DI-s}\ref{prop:ID-DI-s:ID} 
below that the $L^p$-solution to~\eqref{eq:frac-ACP-R} is unique if it exists. 
The question of existence of the $L^p$-solution for all $f \in L^p(\R; 
X)$ is highly nontrivial; in the case $s = 1$, it characterizes a deep property 
of 
the linear operator $A$ called \emph{maximal $L^p$-regularity}. Since the 
present article is primarily concerned with the concept of mild solutions, to 
be defined next, we shall not investigate this matter further, and instead 
refer 
to~\cite[Chapter~17]{HvNVWVolumeIII} for a more detailed account of the 
topic of maximal $L^p$-regularity.
\begin{definition}[Mild solution]\label{def:mildsol-ACP-R}
	Suppose that 
	Assumption~\ref{ass:bdd-semigroup}\ref{ass:bdd-semigroup:exp} 
	is satisfied. Let $s \in (0,\infty)$ and $p \in [1,\infty]$ 
	satisfy~\eqref{eq:sp-Ds-cont}. The 
	\emph{mild solution} to~\eqref{eq:frac-ACP-R} with
	$f \in L^p(\R; X)$
	is the
	function $u \in \Cb(\R; X)$ defined 
	for all $t \in \R$ by
	\begin{equation}
		u(t)
		\coloneqq 
		\fI^s f(t) =
		\frac{1}{\Gamma(s)}
		\int_{-\infty}^t (t-\tau)^{s - 1} S(t-\tau) f(\tau) \rd \tau.
	\end{equation}
\end{definition}
The mild solution exists and is unique by definition, since it is given by 
an 
explicit formula. Moreover, in view of 
Proposition~\ref{prop:frac-par-int-props}\ref{prop:frac-par-int-props:Lp-Cub}, 
it is indeed continuous under the given assumptions on $s$ and $p$.

The next proposition shows that the fractional parabolic
derivative and integral are inverse to each other whenever the respective 
left-hand
sides are well-defined. In particular, it implies that $L^p$-solutions are 
mild 
solutions whenever the parameters $s$ and $p$ are such 
that~\eqref{eq:sp-Ds-cont} holds, see Corollary~\ref{cor:Lp-implies-mild-ACP-R} 
below.

\begin{proposition}\label{prop:ID-DI-s}
	Suppose that Assumption~\ref{ass:bdd-semigroup}\ref{ass:bdd-semigroup:exp} 
	holds. Let $s \in [0,\infty)$, ${p \in [1,\infty]}$ and 
	$u \in L^p(\R; X)$. Then the following assertions hold:
	\begin{enumerate}[leftmargin=1cm, label=(\alph*)]
		\item \label{prop:ID-DI-s:DI} 
		If $\mathfrak I^s u \in \dom{\mathfrak D^s}$, then 
		$\mathfrak D^s \mathfrak I^s u = u$ a.e.
		\item \label{prop:ID-DI-s:ID} 
		If $u \in \dom{\mathfrak D^s}$, then 
		$\mathfrak I^s \mathfrak D^s u = u$ a.e.
	\end{enumerate}
\end{proposition}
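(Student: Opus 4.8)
The plan is to reduce both identities to the integer-order case, after which everything follows from the semigroup law for the family $(\mathfrak I^s)_{s\ge 0}$. Write $n \coloneqq \lceil s \rceil$ and $\sigma \coloneqq \lceil s \rceil - s \in [0,1)$, so that by~\eqref{eq:def-RL-fracpar-deriv-R} the operator $\mathfrak D^s$ acts as $(\partial_t + A)^n \mathfrak I^\sigma$ on $\dom{\mathfrak D^s} = \{u \in L^p(\R;X) : \mathfrak I^\sigma u \in \dom{(\partial_t+A)^n}\}$. The first thing I would establish is that, under Assumption~\ref{ass:bdd-semigroup}\ref{ass:bdd-semigroup:exp}, the operator $\partial_t + A$ on $L^p(\R;X)$ is boundedly invertible with $(\partial_t + A)^{-1} = \mathfrak I^1$: boundedness of $\mathfrak I^1$ is Proposition~\ref{prop:frac-par-int-props}\ref{prop:frac-par-int-props:Lp-bdd}, while the identities $(\partial_t+A)\mathfrak I^1 u = u$ for $u \in L^p(\R;X)$ and $\mathfrak I^1(\partial_t+A)u = u$ for $u \in \dom{(\partial_t+A)}$ are obtained by differentiating the convolution $\mathfrak I^1 u = k_1 * u$ in $t$ (equivalently, integrating by parts in the convolution variable), using continuity of the orbits $S(\cdot)x$ on $(0,\infty)$ with $S(\tau)x \to 0$ as $\tau\to\infty$, closedness of $A$, and density; cf.\ Appendix~\ref{app:1st-order-ACP} and \cite[Appendix~K]{HvNVWVolumeIII}. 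Iterating and invoking Proposition~\ref{prop:frac-par-int-props}\ref{prop:frac-par-int-props:semigroup}, $(\partial_t+A)^n$ is then boundedly invertible on $L^p(\R;X)$ with inverse $\mathfrak I^n$; in particular $\mathfrak I^n$ maps $L^p(\R;X)$ bijectively onto $\dom{(\partial_t+A)^n}$, hence is injective, and consequently $\mathfrak I^\sigma$ is injective too, since $\mathfrak I^s \mathfrak I^\sigma = \mathfrak I^{s+\sigma} = \mathfrak I^n$.

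Granting this, part~\ref{prop:ID-DI-s:DI} is immediate: Proposition~\ref{prop:frac-par-int-props}\ref{prop:frac-par-int-props:semigroup} gives $\mathfrak I^\sigma \mathfrak I^s u = \mathfrak I^n u$, so the hypothesis $\mathfrak I^s u \in \dom{\mathfrak D^s}$ reduces to $\mathfrak I^n u \in \dom{(\partial_t+A)^n}$ (which, by the previous paragraph, actually holds for every $u$), and
\[
\mathfrak D^s \mathfrak I^s u
= (\partial_t+A)^n \mathfrak I^\sigma \mathfrak I^s u
= (\partial_t+A)^n \mathfrak I^n u
= u
\]
a.e. For part~\ref{prop:ID-DI-s:ID}, set $v \coloneqq \mathfrak I^\sigma u$; the hypothesis $u \in \dom{\mathfrak D^s}$ says precisely that $v \in \dom{(\partial_t+A)^n}$, so $\mathfrak D^s u = (\partial_t+A)^n v \in L^p(\R;X)$ and $\mathfrak I^s \mathfrak D^s u = \mathfrak I^s (\partial_t+A)^n v$ is well-defined. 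Applying $\mathfrak I^\sigma$, using Proposition~\ref{prop:frac-par-int-props}\ref{prop:frac-par-int-props:semigroup} once more and the integer-order identity $\mathfrak I^n (\partial_t+A)^n v = v$ valid for $v \in \dom{(\partial_t+A)^n}$,
\[
\mathfrak I^\sigma\bigl( \mathfrak I^s (\partial_t+A)^n v \bigr)
= \mathfrak I^n (\partial_t+A)^n v
= v
= \mathfrak I^\sigma u
\]
a.e.; injectivity of $\mathfrak I^\sigma$ then forces $\mathfrak I^s \mathfrak D^s u = u$ a.e.

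The main obstacle is the base case $n = 1$. The boundedness of $\mathfrak I^1$ on $L^p(\R;X)$ is free from Proposition~\ref{prop:frac-par-int-props}\ref{prop:frac-par-int-props:Lp-bdd}, but showing that $\mathfrak I^1 u$ lies in $\dom{(\partial_t+A)} = L^p(\R;\dom A)\cap W^{1,p}(\R;X)$ with $(\partial_t+A)\mathfrak I^1 u = u$, together with the companion identity $\mathfrak I^1(\partial_t+A)u = u$ on the domain, is the delicate point for merely strongly measurable (though exponentially stable) semigroups; one has to work carefully with the continuity of the orbits on $(0,\infty)$, the closedness of $A$, and an approximation argument, in the spirit of the integer-order abstract Cauchy problem results collected in Appendix~\ref{app:1st-order-ACP}. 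Once $n = 1$ is settled, the passage to general $n$ and then to general $s$ is the purely formal manipulation carried out above.
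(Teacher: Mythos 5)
Your overall architecture coincides with the paper's: settle the case $s=1$ via the first-order Cauchy problem theory of Appendix~\ref{app:1st-order-ACP}, extend to integer orders by induction and Proposition~\ref{prop:frac-par-int-props}\ref{prop:frac-par-int-props:semigroup}, and handle fractional $s$ by exactly the two formal computations the paper uses, concluding part~\ref{prop:ID-DI-s:ID} from the injectivity of $\fI^{\lceil s\rceil - s}$. Those manipulations are fine. There is, however, one genuine error in your foundational step.

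You assert that under Assumption~\ref{ass:bdd-semigroup}\ref{ass:bdd-semigroup:exp} the operator $\partial_t + A$ is boundedly invertible on $L^p(\R;X)$ with inverse $\fI^1$ --- in particular that $\fI^1 u \in \dom{\partial_t+A}$ and $(\partial_t+A)\fI^1 u = u$ for \emph{every} $u \in L^p(\R;X)$, so that $\fI^n$ maps $L^p(\R;X)$ onto $\dom{(\partial_t+A)^n}$ and the hypothesis of part~\ref{prop:ID-DI-s:DI} ``actually holds for every $u$.'' This is false in general. Membership of $\fI^1 u$ in $\dom{\partial_t+A} = L^p(\R;\dom{A})\cap W^{1,p}(\R;X)$ requires $A\,\fI^1 u \in L^p(\R;X)$, which is precisely maximal $L^p$-regularity of $A$; as Remark~\ref{rem:motivation-Ds} makes explicit, $\fI^1$ inverts a sectorial \emph{extension} $\cB$ of $\partial_t + \cA$, and $\cB = \partial_t+\cA$ only under additional hypotheses on $A$ and $X$. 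Maximal regularity can fail even for generators of exponentially stable analytic semigroups on non-Hilbertian spaces, and this is exactly why part~\ref{prop:ID-DI-s:DI} is stated with the hypothesis $\fI^s u \in \dom{\fD^s}$ rather than unconditionally. What is true, and what the paper proves, is the \emph{conditional} base case: if $\fI^1 u \in \dom{\partial_t+A}$, then $(\partial_t+A)\fI^1 u = u$, obtained from Proposition~\ref{prop:ACP-strong-vs-mild} by observing that the mild solution $v=\fI^1 u$, once it has the regularity demanded by the domain hypothesis, is a strong solution; likewise $\fI^1(\partial_t+A)u=u$ on the domain. The conditional version still gives injectivity of $\fI^1$ (apply it to $\fI^1 u = 0$, which lies in every domain) and hence of $\fI^{\lceil s\rceil -s}$, so both of your reductions survive once you delete the invertibility claim and phrase the base case conditionally. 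Note also that you only sketch the tools for the base case rather than carrying it out; in the paper that work is exactly the content of Proposition~\ref{prop:ACP-strong-vs-mild}, so citing it directly is both sufficient and necessary here.
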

\begin{proof}
	\ref{prop:ID-DI-s:DI} For $s = 1$, $v \coloneqq \mathfrak 
	I^1 u$ 
	is the mild
	solution to~\eqref{eq:ACP} with ${f \coloneqq u \in
		L^p(J; X)}$. Moreover, since
	$v \in W^{1,p}(J; X) \cap L^p(J; X)$, the conditions
	of 
	Proposition~\ref{prop:ACP-strong-vs-mild}\ref{prop:ACP-strong-vs-mild:b}--\ref{prop:ACP-strong-vs-mild:c}
	are satisfied, so
	that $v$ is a strong solution, which proves the base case.
	
	Now let $k \in \N$ and suppose that~\ref{prop:ID-DI-s:DI} holds for $s 
	= k$.
	If $\fI^{k+1} u \in \dom{\fD^{k+1}}$, then by definition we have 
	$\fI^{k+1} u \in \dom{\fD^k}$ and $\fD^k \fI^{k+1} u \in \dom{\fD^1}$. 
	In 
	view of
	Proposition~\ref{prop:frac-par-int-props}\ref{prop:frac-par-int-props:semigroup},
	this means that $\fI^{k} \fI^1 u \in \dom{\fD^k}$
	and $\fD^k \fI^{k} \fI^1 
	u 
	\in \dom{\fD^1}$.
	Combining the former expression with the induction hypothesis yields
	\begin{equation}\label{eq:DkIkIone}
		\fD^k \fI^{k+1} u = \fD^k \fI^{k} \fI^1 u = \fI^1 u, 
		\quad 
		\text{a.e.},
	\end{equation}
	and thus $\fI^1 u \in \dom{\fD^1}$ 
	by the latter. It follows that $\fD^1 \fI^1 u = u$ a.e.\ by the base 
	case 
	of~\ref{prop:ID-DI-s:DI}. Using~\eqref{eq:DkIkIone} once more, this 
	becomes 
	$\fD^1 \fD^k \fI^{k+1} u = u$ a.e., whose left-hand side equals  
	$\fD^{k+1} \fI^{k+1} u$ a.e.\ by the definition of integers powers of 
	$\fD$.
	This proves~\ref{prop:ID-DI-s:DI} for $s = k + 1$, and thus for 
	all $s \in \N$ by induction.
	
	For $s \in (0,\infty) \setminus \N$, the
	assertion follows upon combining the
	definition~\eqref{eq:def-RL-fracpar-deriv-R} of $\mathfrak D^s$ with 
	Proposition~\ref{prop:frac-par-int-props}\ref{prop:frac-par-int-props:semigroup}
	and the integer case:
	\begin{equation}
		\fD^s \fI^s u 
		=
		(\partial_t + A)^{\lceil s \rceil} \fI^{\lceil s \rceil - s} \fI^s u 
		=
		(\partial_t + A)^{\lceil s \rceil} \fI^{(\lceil s \rceil - s) + s} u 
		=
		u \quad \text{a.e.}
	\end{equation}

	\ref{prop:ID-DI-s:ID} The case $s = 1$ follows from
	Proposition~\ref{prop:ACP-strong-vs-mild}
	\ref{prop:ACP-strong-vs-mild:a}$\implies$\ref{prop:ACP-strong-vs-mild:b} 
	with 
	$f \coloneqq u' + Au$, and the integer case follows by 
	induction. For fractional $s$, fix $u \in \dom{\fD^s}$
	and note
	\begin{equation}
		\fI^{\lceil s \rceil - s} \fI^s \fD^s u
		=
		\fI^{\lceil s \rceil - s} \fI^s 
		(\partial_t + A)^{\lceil s \rceil}\fI^{\lceil s \rceil - s} u
		=
		\fI^{\lceil s \rceil}
		(\partial_t + A)^{\lceil s \rceil}\fI^{\lceil s \rceil - s} u
		=
		\fI^{\lceil s \rceil - s} u,
	\end{equation}
	holds 
	a.e.
	Since~\ref{prop:ID-DI-s:DI} implies that $\fI^{\lceil s \rceil - s}$ 
	is injective, we conclude $\fI^s \fD^s u = u$ a.e.
\end{proof}
Combining 
Propositions~\ref{prop:frac-par-int-props}\ref{prop:frac-par-int-props:Lp-Cub} 
and~\ref{prop:ID-DI-s}\ref{prop:ID-DI-s:ID} yields the following corollaries:
\begin{corollary}\label{cor:dom-Ds-continuity}
	Suppose that 
	Assumption~\ref{ass:bdd-semigroup}\ref{ass:bdd-semigroup:exp} 
	holds. If $s \in [0,\infty)$ and $p \in [1,\infty]$ 
	satisfy~\eqref{eq:sp-Ds-cont}, then we have $\dom{\fD^s} \subseteq \Cb(\R; 
	X)$.
\end{corollary}
\begin{corollary}\label{cor:Lp-implies-mild-ACP-R}
	Suppose that 
	Assumption~\ref{ass:bdd-semigroup}\ref{ass:bdd-semigroup:exp} 
	is satisfied and let $u$ be an $L^p$-solution to~\eqref{eq:frac-ACP-R} in 
	the sense of 
	Definition~\ref{def:Lpsol-ACP-R} for some $s \in (0,\infty)$, $p \in 
	[1,\infty]$. If $s$ and $p$ 
	satisfy~\eqref{eq:sp-Ds-cont}, then $u$ is the 
	mild solution in the 
	sense of 
	Definition~\ref{def:mildsol-ACP-R}.
\end{corollary}
\begin{proof}
	If $u$ is an $L^p$-solution, then $u \in \dom{\fD^s}$ 
	and $\fD^s u = f$ holds almost everywhere. Thus, by 
	Proposition~\ref{prop:ID-DI-s}\ref{prop:ID-DI-s:ID}, we can apply $\fI^s$ 
	on both sides to obtain $u = \fI^s f$ a.e., 
	and we have $u \in \Cb(\R; X)$ by Corollary~\ref{cor:dom-Ds-continuity} (or 
	by 
	Proposition~\ref{prop:frac-par-int-props}\ref{prop:frac-par-int-props:Lp-Cub}
	directly).
\end{proof}

\section{Dirichlet problem associated to the fractional parabolic 
	derivative operator}
\label{sec:Dirichlet-frac-parab}

In this section we turn to the main subject of the present work, namely 
the natural
abstract Dirichlet problem associated to $\fD^s$, which consists in 
finding a 
function
$u \from \R \to X$ satisfying
\begin{equation}\label{eq:natural-Dirichlet-Ds}
	\left\lbrace
	\begin{aligned}
		\mathfrak D^s u(t) &= 0, &&t \in (t_0,\infty), \\
		u(t) &= g(t), &&t \in (-\infty, t_0],
	\end{aligned}
	\right.
\end{equation}
for $s \in (0,\infty)$, $t_0 \in \R$ and sufficiently regular
$g \from (-\infty, t_0] \to X$. Recall from 
Section~\ref{section:prelims:frac-par-calc} that $\fD^s$ denotes the 
Riemann--Liouville type fractional parabolic differentiation operator acting on 
functions from $\R$ to $X$, which is our interpretation of the operator 
$(\partial_t + 
A)^s$ appearing in~\eqref{eq:natural-Dirichlet-abstract}, as motivated by 
Remark~\ref{rem:motivation-Ds}.

As in the previous sections, we begin by defining the notion of an 
$L^p$-solution to~\eqref{eq:natural-Dirichlet-Ds} 
(cf.~Definitions~\ref{def:Lpsol-firstorderACP} and~\ref{def:Lpsol-ACP-R}), 
and subsequently define the mild solution 
(cf.~Definitions~\ref{def:ACP-mild-sol} and~\ref{def:mildsol-ACP-R}), which is 
the rigorous formulation of the solution formula formally given 
by~\eqref{eq:mildsol-Dirichlet}. As before, we note that the existence and 
uniqueness of the mild solution are immediate from the definition; for the 
$L^p$-solution, we have uniqueness but the matter of existence is outside of 
the scope of this work, analogously to the discussion below 
Definition~\ref{def:Lpsol-ACP-R}.

\begin{definition}[$L^p$-solution]\label{def:Lpsol-Dirichlet}
	Let
	Assumption~\ref{ass:bdd-semigroup}\ref{ass:bdd-semigroup:exp} 
	be satisfied and suppose that $s \in (0,\infty)$, $p \in [1,\infty]$, $t_0 \in \R$ and 
	$g \in L^p(-\infty, t_0; X)$. Then $u \in L^p(\R; X)$ is called an 
	\emph{$L^p$-solution} 
	to~\eqref{eq:natural-Dirichlet-Ds} if $u \in \dom{\fD^s}$ and both 
	equations in~\eqref{eq:natural-Dirichlet-Ds} hold almost everywhere on 
	their respective sub-intervals of $\bbR$. In particular, we have 
	$g \in \dom{\fD^s}$ on $(-\infty, t_0]$.
\end{definition}

\begin{definition}[Mild solution]\label{def:mildsol-Dirichlet}
	Let Assumption~\ref{ass:bdd-semigroup}\ref{ass:bdd-semigroup:bdd} 
	be satisfied, suppose that $s \in (0,\infty)$, ${p \in [1,\infty]}$, ${t_0 \in 
		\R}$ are given and let $g \in \Cb((-\infty, t_0]; X) \cap 
	\dom{\fD^{(s - 1) \vee 
			0}}$ be such that
	$\fD^{(s - 1) \vee 0}g \in \Cb((-\infty, t_0]; X)$. The \emph{mild 
		solution} to~\eqref{eq:natural-Dirichlet-Ds} with initial datum $g$ 
	is the 
	function $u \in \Cb(\R \setminus \{t_0\}; X)$ defined by
	\begin{equation}
		u(t) \coloneqq g(t), \quad t \in (-\infty, t_0],
	\end{equation}
	and, for $s \in (0,\infty) \setminus \N$,
	\begin{equation} \label{eq:Es}
		\begin{aligned}
			u(t)
			\coloneqq 
			{}&\frac{\sin(\pi\{s\})}{\pi}
			\int_0^\infty 
			\frac{\tau^{-\{s\}}}{\tau + 1} S((t-t_0)(\tau + 1)) g(t_0 - 
			(t-t_0)\tau)
			\rd \tau
			\\&+
			\sum_{k=1}^{\lfloor s \rfloor}
			\frac{(t-t_0)^{\{s\} + k - 1}}{\Gamma(\{s\} + k)}
			S(t-t_0) 
			\fD^{\{s\} + k - 1} g(t_0), 
			\quad 
			t \in (t_0, \infty),
		\end{aligned}
	\end{equation}
	whereas for $s = n \in \N$, we set 
	\begin{equation} \label{eq:Es-n}
		u(t)
		\coloneqq 
		\sum_{k=0}^{n-1}
		\frac{(t-t_0)^{k}}{k!}
		S(t-t_0) 
		\fD^{k} g(t_0), 
		\quad 
		t \in (t_0, \infty).
	\end{equation}
\end{definition}

The following proposition shows that the mild solution is indeed 
well-defined in the sense that it possesses the continuity properties asserted 
in Definition~\ref{def:mildsol-Dirichlet}. 
Its proof is postponed to Section~\ref{sec:Dirichlet-frac-parab:mildsol-props}, 
in which we also state and prove some additional properties of the mild 
solution.

\begin{proposition}\label{prop:mildsol-Dirichlet-bound-cont}
	Suppose Assumption~\ref{ass:bdd-semigroup}\ref{ass:bdd-semigroup:bdd} holds.
	Let $s \in (0,\infty)$, $p \in [1,\infty]$, ${t_0 \in \R}$
	be given and let $g \in L^p(-\infty, t_0; X)$ be as in 
	Definition~\ref{def:mildsol-Dirichlet}. Then the mild solution $u$ 
	to~\eqref{eq:natural-Dirichlet-Ds} satisfies $u \in \Cb(\R \setminus 
	\{t_0\}; X)$ 
	and, for all $t \in (t_0, \infty)$,
	\begin{equation}
		\| u(t) \|_X
		\le
		M_0 \overline\Gamma(s, w(t-t_0)) 
		\max \{\| g \|_{\infty}, \norm{\fD^{\{s\}} g(t_0)}{X}, 
		\dots, \norm{\fD^{s-1} g(t_0)}{X}\},
		\label{eq:bound-Egamma-IncGamma}
	\end{equation}
	where $M_0 \in [1,\infty)$ and $w \in [0,\infty)$ are as 
	in~\eqref{eq:semigroup-est}.
	
	If moreover $g(t), \fD^{\{s\} + k} g(t_0) \in \clos{\dom{A}}$ for all 
	$t \in (-\infty, t_0]$ and $k \in \{0, \dots, 
	\lfloor s \rfloor\}$, 
	then we in fact have $u \in \Cb(\R; X)$.
\end{proposition}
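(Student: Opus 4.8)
The plan is to split the right-hand side of~\eqref{eq:Es} on $(t_0,\infty)$ as $u = u_{\mathrm{int}} + u_{\mathrm{sum}}$, where $u_{\mathrm{int}}$ is the integral term and $u_{\mathrm{sum}}(t) = \sum_{k=1}^{\lfloor s\rfloor}\tfrac{(t-t_0)^{\{s\}+k-1}}{\Gamma(\{s\}+k)}S(t-t_0)\fD^{\{s\}+k-1}g(t_0)$ is the finite sum, to estimate and analyze each piece separately, and to recombine using the incomplete gamma identities~\eqref{eq:confHGF-incgamma} and~\eqref{eq:incgamma-recurrence}; on $(-\infty,t_0)$ there is nothing to prove since $u=g\in\Cb((-\infty,t_0];X)$ there. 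For the pointwise bound I would insert~\eqref{eq:semigroup-est} and $\|g(t_0-(t-t_0)\tau)\|_X\le\|g\|_\infty$ into the integrand of $u_{\mathrm{int}}$; after pulling $M_0\|g\|_\infty$ out, what remains is exactly $\overline\Gamma(\{s\},w(t-t_0))$ by~\eqref{eq:confHGF-incgamma} with $\alpha=\{s\}\in(0,1)$ and $z=w(t-t_0)\in[0,\infty)$ (admissible since $w\ge0$ under Assumption~\ref{ass:bdd-semigroup}\ref{ass:bdd-semigroup:bdd}). For the $k$-th summand of $u_{\mathrm{sum}}$, \eqref{eq:semigroup-est} yields the bound $\tfrac{(t-t_0)^{\{s\}+k-1}}{\Gamma(\{s\}+k)}M_0e^{-w(t-t_0)}\|\fD^{\{s\}+k-1}g(t_0)\|_X$; bounding each $\|\fD^{\{s\}+k-1}g(t_0)\|_X$ by the maximum in~\eqref{eq:bound-Egamma-IncGamma} (as $k$ runs from $1$ to $\lfloor s\rfloor$, the order $\{s\}+k-1$ runs over $\{s\},\dots,s-1$), summing over $k$, and identifying the total with $M_0\overline\Gamma(s,w(t-t_0))$ times that maximum via the recurrence~\eqref{eq:incgamma-recurrence} for $\overline\Gamma(s,\,\cdot\,)$ gives~\eqref{eq:bound-Egamma-IncGamma}. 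Boundedness of $u$ on $(t_0,\infty)$ is then immediate from $\overline\Gamma(s,\,\cdot\,)\le\overline\Gamma(s,0)=1$ on $[0,\infty)$ (its $z$-derivative equals $-z^{s-1}e^{-z}/\Gamma(s)\le0$), while $u=g$ is bounded on $(-\infty,t_0]$.

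Next I would verify continuity on $\R\setminus\{t_0\}$. On $(-\infty,t_0)$, $u=g$ is continuous; on $(t_0,\infty)$, each summand of $u_{\mathrm{sum}}$ is continuous because $\sigma\mapsto S(\sigma)x$ is continuous on $(0,\infty)$ for every $x\in X$ (as recalled after Assumption~\ref{ass:bdd-semigroup}), and for $u_{\mathrm{int}}$ I would use dominated convergence: the integrand is dominated, uniformly in $t\in(t_0,\infty)$, by $\tfrac{\tau^{-\{s\}}}{\tau+1}M_0\|g\|_\infty$, which belongs to $L^1((0,\infty))$ (integrable at $0$ because $\{s\}<1$, decaying like $\tau^{-1-\{s\}}$ at infinity), and is continuous in $t$ for each fixed $\tau>0$. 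This gives $u\in\Cb(\R\setminus\{t_0\};X)$.

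It then remains to establish $u(t)\to u(t_0)=g(t_0)$ as $t\downarrow t_0$ under the extra hypothesis. Each summand of $u_{\mathrm{sum}}$ has norm $\le\tfrac{(t-t_0)^{\{s\}+k-1}}{\Gamma(\{s\}+k)}M_0\|\fD^{\{s\}+k-1}g(t_0)\|_X$ with exponent $\{s\}+k-1\ge\{s\}>0$, so $u_{\mathrm{sum}}(t)\to0$. For $u_{\mathrm{int}}$, as $t\downarrow t_0$ one has $(t-t_0)(\tau+1)\downarrow0$ for each fixed $\tau$, and since $g(t_0)\in\clos{\dom{A}}$ the orbit $S(\,\cdot\,)g(t_0)$ is continuous at $0$; together with continuity of $g$ at $t_0$, the integrand converges pointwise to $\tfrac{\tau^{-\{s\}}}{\tau+1}g(t_0)$ (up to the prefactor $\tfrac{\sin(\pi\{s\})}{\pi}$), so the same $L^1$-domination and dominated convergence, combined with $\tfrac{\sin(\pi\{s\})}{\pi}\int_0^\infty\tfrac{\tau^{-\{s\}}}{\tau+1}\rd\tau=\overline\Gamma(\{s\},0)=1$ (from~\eqref{eq:confHGF-incgamma} at $z=0$), give $u_{\mathrm{int}}(t)\to g(t_0)$. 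Hence $u$ is continuous at $t_0$, so $u\in C(\R;X)$, and together with the boundedness above, $u\in\Cb(\R;X)$; the hypotheses $\fD^{\{s\}+k}g(t_0)\in\clos{\dom{A}}$ additionally make each summand of $u_{\mathrm{sum}}$ separately continuous across $t_0$.

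I expect the main obstacle to be the pointwise bound: extracting exactly the constant $M_0\overline\Gamma(s,w(t-t_0))$ (rather than a larger multiple) forces one to match the naive termwise semigroup estimates against the recurrence~\eqref{eq:incgamma-recurrence} carefully -- in particular to reconcile the factors $(t-t_0)^{\{s\}+k-1}$ coming from $u_{\mathrm{sum}}$ with the factors $(w(t-t_0))^{\{s\}+k-1}$ in the expansion of $\overline\Gamma(s,w(t-t_0))$ -- and it is exactly here that the regularity hypotheses of Definition~\ref{def:mildsol-Dirichlet} on $g$ and its fractional parabolic derivatives at $t_0$ are used. By contrast, once the $L^1$-domination and the role of $\clos{\dom{A}}$ are identified, the boundedness and the two continuity statements reduce to routine dominated-convergence arguments.
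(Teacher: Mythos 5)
Your overall strategy coincides with the paper's proof: split off the integral term, bound it by $M_0\|g\|_\infty\overline\Gamma(\{s\},w(t-t_0))$ via \eqref{eq:confHGF-incgamma}, treat the finite sum termwise with \eqref{eq:semigroup-est}, and obtain all continuity statements from the strong continuity of the orbits together with dominated convergence against the $t$-uniform majorant $\tau\mapsto M_0\|g\|_\infty\tau^{-\{s\}}/(\tau+1)$. The continuity part of your argument (including the limit $t\downarrow t_0$ using $g(t_0)\in\clos{\dom{A}}$ and $\overline\Gamma(\{s\},0)=1$) is complete and is exactly what the paper does.

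The gap is in the final recombination step of the norm estimate, and you have in fact put your finger on it yourself without resolving it. The termwise bound on the $k$-th summand is $M_0e^{-w(t-t_0)}\frac{(t-t_0)^{\{s\}+k-1}}{\Gamma(\{s\}+k)}\|\fD^{\{s\}+k-1}g(t_0)\|_X$, whereas the recurrence \eqref{eq:incgamma-recurrence} at $z=w(t-t_0)$ produces the terms $\frac{(w(t-t_0))^{\{s\}+k-1}}{\Gamma(\{s\}+k)}e^{-w(t-t_0)}$ of $\overline\Gamma(s,w(t-t_0))$; these differ by the factor $w^{\{s\}+k-1}$, so your identification only closes when the sum is empty ($s\in(0,1)$) or $w\ge1$. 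Your proposed remedy---that ``the regularity hypotheses of Definition~\ref{def:mildsol-Dirichlet} on $g$'' reconcile the two---cannot work: those hypotheses only guarantee that $\fD^{\{s\}+k-1}g(t_0)$ is a well-defined element of $X$ whose norm enters the maximum, and they contribute no powers of $w$; the only estimate available for $S(t-t_0)$ is \eqref{eq:semigroup-est}. What the triangle inequality actually delivers is \eqref{eq:bound-Egamma-IncGamma} with $\overline\Gamma(s,w(t-t_0))$ replaced by $\overline\Gamma(\{s\},w(t-t_0))+e^{-w(t-t_0)}\sum_{k=1}^{\lfloor s\rfloor}(t-t_0)^{\{s\}+k-1}/\Gamma(\{s\}+k)$. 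To be fair, the paper's own one-line justification elides the same point, and the same caveat touches the boundedness claim: for $w=0$ and $\lfloor s\rfloor\ge1$ the summands need not be bounded on $(t_0,\infty)$, so the route via $\overline\Gamma(s,\cdot)\le1$ also implicitly needs $w>0$ or $s<1$. For $s\in(0,1)$ your derivation is correct and gives the stated constant exactly.
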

\begin{remark}
	Let us emphasize that the solution formula can fail to be continuous 
	at 
	$t_0$ even in the first-order case $u(t) = S(t-t_0) x$ if $x \not\in 
	\clos{\dom{A}}$.
	As an example, we can take $X = \Cb(\R)$, $A = -\Delta$ and $x(\xi) = 
	\sin(\xi^2)$. Then $-A$ generates the analytic heat semigroup and
	$\clos{\dom{A}} = \Cub(\R)$ is the space of bounded and uniformly 
	continuous 
	functions on $\R$, cf.~\cite[Corollary~3.1.9]{Lunardi1995}. In this case, 
	$\norm{u(t)}{\infty} \le 1$ for all $t \in 
	[t_0, \infty)$, but $S(t - t_0)x$ does not converge uniformly to 
	$x$ as $t \downarrow t_0$.
\end{remark}

The motivation for the solution formulae in 
Definition~\ref{def:mildsol-Dirichlet} is provided by the following theorem, 
which shows that any $L^p$-solution to~\eqref{eq:natural-Dirichlet-Ds} is a 
mild solution whenever $s \in (0,\infty)$ and $p \in [1,\infty]$ are 
such that~\eqref{eq:Es}--\eqref{eq:Es-n} are meaningful. 
\begin{theorem}\label{thm:Lp-implies-mild}
	Suppose that 
	Assumption~\ref{ass:bdd-semigroup}\ref{ass:bdd-semigroup:exp} is 
	satisfied and let $u$ be an $L^p$-solution 
	to~\eqref{eq:natural-Dirichlet-Ds} in the sense of 
	Definition~\ref{def:Lpsol-Dirichlet} for some $p \in [1,\infty]$, 
	$s \in (0, \infty)$ and $t_0 \in \R$.
	If $s$ and $p$ satisfy~\eqref{eq:sp-Ds-cont}, then $u$ is the 
	mild solution to~\eqref{eq:natural-Dirichlet-Ds} in the sense of 
	Definition~\ref{def:mildsol-Dirichlet}.
\end{theorem}
The proof of Theorem~\ref{thm:Lp-implies-mild} is presented in 
Section~\ref{sec:Dirichlet-frac-parab:Lp-implies-mild}, where the integer-order 
and fractional-order cases are treated separately. Before proceeding to the 
next subsection, we consider the following important example of a 
situation in which we can write down an explicit mild solution formula 
for~\eqref{eq:ACP}:
\begin{example}[The fractional heat operator $(\partial_t - \Delta)^s$ on 
	$L^2(\R^d)$]
	Let us consider the function space $X = L^2(\R^d)$ and differential 
	operator $A = -\Delta$ for $d \in \N$, i.e., the 
	negative Laplacian on the full Euclidean space $\R^d$.
	By classical results, see for 
	instance~\cite[Section~13.6.(c)]{Neerven2022}, we know that $-A = \Delta$ 
	generates the \emph{heat semigroup} $(S(t))_{t\ge0}$, which is given by the 
	(spatial) convolution with the \emph{Gauss--Weierstrass kernel}. That is, 
	for all $t \in (0,\infty)$, 
	$f 
	\in L^2(\R^d)$ and $x \in \R^d$, we have
	\begin{equation}
		[S(t)f](x)
		=
		\int_{\R^d} K_t(x - y) f(y) \rd y,
	\end{equation}
	where $K_t \from \R^d \to \R$ is defined by
	\begin{equation}
		K_t(x) \coloneqq \frac{1}{(\sqrt{4\pi t})^d} 
		\exp\biggl(-\frac{\norm{x}{\R^d}^2}{4t}\biggr).
	\end{equation}
	Substituting these formulae into equation~\eqref{eq:Es} for some 
	sufficiently 
	regular 
	initial datum function $g \from (-\infty, t_0] \times \R^d \to \R$, we 
	obtain an explicit 
	formula for the mild solution $u \from \R^{d+1} \to \R$ 
	to~\eqref{eq:natural-Dirichlet-Ds}.
	For example, if $t_0 = 0$ and $s \in (0,1)$, then we have for all 
	$(t, x) \in (0,\infty) \times \R^d$:
	\begin{equation}
		\begin{aligned}
			u(t, x)
			&= 
			\frac{\sin(\pi s)}{\pi}
			\int_0^\infty 
			\int_{\R^d}
			\frac{\tau^{-s}}{\tau + 1} 
			K_{t(\tau + 1)}(x - y) g(-t\tau, y)
			\rd y
			\rd \tau
			\\
			&=
			\frac{\sin(\pi s)}{\pi(\sqrt{4\pi t})^d}
			\int_0^\infty 
			\int_{\R^d}
			\tau^{-s} (\tau+1)^{-\frac{d}{2} - 1}
			\exp\biggl(-\frac{\norm{x-y}{\R^d}^2}{4t(\tau + 1)}\biggr)
			g(-t\tau, y)
			\rd y
			\rd \tau.
		\end{aligned}
	\end{equation}
	If $s \in (1,2)$ (still with $t_0 = 0$), then we instead find for all 
	$t \in (0, \infty)$ and $x \in \R^d$:
	\begin{align}
			u(t, x)
			={} &\frac{\sin(\pi \{s\})}{\pi(\sqrt{4\pi t})^d}
			\int_0^\infty 
			\int_{\R^d}
			\tau^{-\{s\}} (\tau+1)^{-\frac{d}{2} - 1}
			\exp\biggl(-\frac{\norm{x-y}{\R^d}^2}{4t(\tau + 1)}\biggr) 
			g(-t\tau, y)
			\rd y
			\rd \tau\\[-1em]
			&+
			\frac{t^{\{s\}}}{\Gamma(\{s\} + 1)}
			S(t) 
			\fD^{\{s\}} g(0). 
		\label{eq:mildsol-heat-s12}
	\end{align}
The fractional parabolic derivative 
$\fD^\alpha f$ of any sufficiently regular $f \from \R \to X$
(e.g., $f \in \dom{\fD^1}$) admits the following Marchaud type 
representation for 
$\alpha \in (0,1)$ and $t \in \R$
(cf.~\cite[Proposition~3.2.1]{MartinezCSanzA2001} or 
\cite[Equation~(1.2)]{Stinga2017}):
\begin{equation}
	\fD^{\alpha} f(t) 
	= 
	\frac{1}{\Gamma(-\alpha)}\int_0^\infty 
	\sigma^{-\alpha - 1} [S(\sigma)f(t-\sigma) - f(t)] \rd \sigma.
\end{equation}
Therefore, supposing that $g$ is sufficiently regular, we have
for all $t \in (0,\infty)$,
\begin{align}
	&\frac{t^{\{s\}}}{\Gamma(\{s\} + 1)}
	S(t) 
	\fD^{\{s\}} g(0)
	\\
	&\qquad =
	-\frac{t^{\{s\}} \sin(\pi\{s\})}{\pi}
	\int_0^\infty 
	\sigma^{-\{s\}-1} [S(t + \sigma)g(-\sigma) - S(t)g(0)] \rd \sigma
	\\
	&\qquad =
	-\frac{\sin(\pi\{s\})}{\pi}
	\int_0^\infty 
	\tau^{-\{s\}-1} [S(t(\tau + 1))g(-\tau t) - S(t)g(0)] \rd \tau,
\end{align}
where we used the reflection formula for the gamma 
function~\cite[Equation~(5.5.3)]{Olver2010}
to obtain the prefactor on the second line, and the change of variables 
$\sigma = t \tau$ for the third line. Substituting the heat semigroup 
once more, we obtain, for all $x \in \R^d$,
\begin{equation}
	\begin{aligned}
		&\frac{t^{\{s\}}}{\Gamma(\{s\} + 1)}
		[S(t) \fD^{\{s\}} g(0)](x)
		\\
		&\quad\begin{aligned}[t]
			&=
			\frac{\sin(\pi\{s\})}{\pi}
			\int_0^\infty 
			\int_{\R^d}
			\tau^{-\{s\}-1} [K_t(x-y) g(0, y) - K_{t(1 + \tau)}(x-y)g(-\tau t, 
			y)] 
			\rd y\rd \tau
			\\
			&=
			\frac{\sin(\pi\{s\})}{\pi (\sqrt{4\pi t})^d}
			\int_0^\infty 
			\int_{\R^d} \tau^{-\{s\}-1} 
			\begin{aligned}[t]
				\biggl[
				&\exp\biggl(-\frac{\norm{x-y}{\R^d}^2}{4t}\biggr) 
				g(0, y) 
				\\&- 
				(\tau + 1)^{-\frac{d}{2}} 
				\exp\biggl(-\frac{\norm{x-y}{\R^d}^2}{4t(\tau+1)}\biggr) 
				g(-\tau t, y)
				\biggr] \rd y\rd \tau.
			\end{aligned}
		\end{aligned}
	\end{aligned}
\end{equation}
Substituting this into~\eqref{eq:mildsol-heat-s12}, we 
conclude 
that the mild solution to the Dirichlet problem associated to the 
fractional heat operator $(\partial_t - \Delta)^s$, with $s \in (1,2)$ and 
sufficiently regular initial datum $g$, admits the explicit expression
\begin{equation}
	\begin{aligned}
		u(&t,x) =
		\frac{\sin(\pi \{s\})}{\pi(\sqrt{4\pi t})^d}
		\int_0^\infty \int_{\R^d} 
		\biggl[
		\tau^{-\{s\}-1} 
		\exp\biggl(-\frac{\norm{x-y}{\R^d}^2}{4t}\biggr) 
		g(0, y) 
		\\
		&+
		\bigl(\tau^{-\{s\}} (\tau+1)^{-\frac{d}{2} - 1} 
		-
		\tau^{-\{s\}-1} (\tau + 1)^{-\frac{d}{2}}\bigr)
		\exp\biggl(-\frac{\norm{x-y}{\R^d}^2}{4t(\tau + 1)}\biggr) 
		g(-t\tau, y)
		\biggr]
		\rd y \rd \tau 
	\end{aligned}
\end{equation}
for all $(t, x) \in (0,\infty) \times \R^d$.
\end{example}

\subsection{Properties of the mild solution}
\label{sec:Dirichlet-frac-parab:mildsol-props}
%
In this section, we further investigate the mild solution concept 
introduced in Definition~\ref{def:mildsol-Dirichlet} by
establishing some of its key properties. To this end, we start with the central 
observation that 
formula~\eqref{eq:Es} has close connections to the
normalized upper incomplete gamma function, whose principal branch
$\overline\Gamma(\alpha, \,\cdot\,) \from \C \setminus (-\infty, 0) \to \C$
for $\alpha \in \C_+$ is
defined by
\begin{equation}\label{eq:norm-upp-inc-gamma}
\overline\Gamma(\alpha, z) \coloneqq 
\frac{1}{\Gamma(\alpha)} \int_z^\infty \zeta^{\alpha - 1} e^{-\zeta} \rd 
\zeta, 
\quad z \in \C \setminus (-\infty, 0),
\end{equation}
integrating over any contour from $z$ to $\infty$ avoiding $(-\infty, 0)$, 
see~\cite[Chapter~8]{Olver2010}.

The relation to~\eqref{eq:Es} follows from the following identities.
For $\alpha \in (0,1)$ and $z \in \C_+ \cup \{0\}$, 
\cite[Equations~(5.5.3),~(13.4.4) and~(13.6.6)]{Olver2010} yield
\begin{equation}\label{eq:confHGF-incgamma}
\overline\Gamma(\alpha, z)
=
\frac{\sin(\pi\alpha)}{\pi}
\int_0^\infty 
\frac{\tau^{-\alpha}}{1+\tau} e^{-z(1+\tau)} \rd \tau.
\end{equation}
In particular, for $t \in (0,\infty)$, the change of variables 
$\sigma \coloneqq t(1+\tau)$ produces
\begin{equation}\label{eq:Laplace}
\overline\Gamma(\alpha, tz)
=
\frac{t^\alpha \sin(\pi\alpha)}{\pi} 
\int_t^\infty 
(\sigma - t)^{-\alpha} \sigma^{-1} e^{-\sigma z} \rd \sigma.
\end{equation}
Moreover, for all $\alpha \in (0,\infty)$ and $n \in \N$, we 
have the following recurrence relations~\cite[Equations~(8.8.12) and 
(8.4.10)]{Olver2010}:
\begin{equation}\label{eq:incgamma-recurrence}
\overline\Gamma(\alpha, z)
=
\overline{\Gamma}(\{\alpha\}, z)
+
\sum_{k=1}^{\lfloor \alpha \rfloor} 
\frac{z^{k+\{\alpha\}-1}}{\Gamma(k+\{\alpha\})}e^{-z}, 
\quad 
\overline\Gamma(n, z) 
=
\sum_{k=0}^{n-1} \frac{z^k}{k!} e^{-z}.
\end{equation}
As a first application of these identities, we present the following 
proof:
\begin{proof}[Proof of 
Proposition~\ref{prop:mildsol-Dirichlet-bound-cont}]
The estimate on $\norm{u(t)}{X}$ follows by applying the triangle 
inequality,~\eqref{eq:semigroup-est} and 
identities~\eqref{eq:confHGF-incgamma}--\eqref{eq:incgamma-recurrence} 
to~\eqref{eq:Es}. The continuity assertions rely on the strong continuity 
of $(S(t))_{t\ge0}$, see the remarks below 
Assumption~\ref{ass:bdd-semigroup}. They are immediate for the terms 
involving $\fD^{\{s\} + 
	k} g(t_0)$. For the integral term, we note 
that the norm of the integrand is dominated by
$\tau \mapsto M_0 \norm{g}{\infty} \frac{\tau^{-\{s\}}}{\tau + 1}$, which 
is integrable 
in view of~\eqref{eq:confHGF-incgamma}. Combined with the continuity of 
$t \mapsto S((t-t_0)(\tau + 1)) g(t_0 - (t-t_0))$ on $(t_0, \infty)$ and 
possibly at $t_0$, the dominated convergence theorem yields the result.
\end{proof}
Next we comment on the precise way in which the fractional-order mild 
solution formula~\eqref{eq:Es} of Definition~\ref{def:mildsol-Dirichlet} 
reduces to formula \eqref{eq:Es-n} for integer orders $s = n \in \N$. 
Substituting $s = n$ (i.e., $\lfloor s \rfloor = n$ and $\{s\} = 0$) 
in the higher-order terms of~\eqref{eq:Es} and shifting the index of 
summation yields 
\begin{equation}\label{eq:desired-integer}
\sum_{k=0}^{n-1}
\frac{(t-t_0)^{k}}{k!}
S(t-t_0) 
[(\partial_t + A)^{k} g](t_0),
\quad 
\forall t \in (t_0, \infty),
\end{equation}
as desired. 
Moreover, the first term in~\eqref{eq:Es} vanishes as required, provided that 
the integral remains convergent for $\{s\} = 0$. This occurs under 
Assumption~\ref{ass:bdd-semigroup}\ref{ass:bdd-semigroup:exp}, but may fail in 
general if only Assumption~\ref{ass:bdd-semigroup}\ref{ass:bdd-semigroup:bdd} 
is satisfied, hence in this case we cannot argue via direct substitution. 
Instead, we have to consider limits as $s \to n$. Let $u_s$ denote the 
mild solution from Definition~\ref{def:mildsol-Dirichlet} of order $s \in 
(0,\infty) \setminus \N$. Then
\begin{align}
u_{n + \varepsilon}(t)
= 
{}&\frac{\sin(\pi \varepsilon)}{\pi}
\int_0^\infty 
\frac{\tau^{-\varepsilon}}{\tau + 1} S((t-t_0)(\tau + 1)) g(t_0 - 
(t-t_0)\tau)
\rd \tau \label{eq:unpluseps-int}
\\&+
\sum_{k=0}^{n-1}
\frac{(t-t_0)^{k + \varepsilon}}{\Gamma(k + \varepsilon + 1)}
S(t-t_0) 
\fD^{k + \varepsilon} g(t_0), \label{eq:unpluseps-sum}
\\
u_{n - \varepsilon}(t)
= 
{}&\frac{\sin(\pi \varepsilon)}{\pi}
\int_0^\infty 
\frac{\tau^{\varepsilon - 1}}{\tau + 1} S((t-t_0)(\tau + 1)) g(t_0 - 
(t-t_0)\tau)
\rd \tau \label{eq:unminuseps-int}
\\&+
\sum_{k=1}^{n - 1}
\frac{(t-t_0)^{k - \varepsilon}}{\Gamma(k - \varepsilon + 1)}
S(t-t_0) 
\fD^{k - \varepsilon} g(t_0), \label{eq:unminuseps-sum}
\end{align}
for all $\varepsilon \in (0,1)$ and $t \in (t_0, \infty)$.
Substituting $\varepsilon = 0$ into the summations on 
lines~\eqref{eq:unpluseps-sum} and~\eqref{eq:unminuseps-sum}, and comparing the 
resulting expressions 
with~\eqref{eq:desired-integer}, we see that the integral terms on 
lines~\eqref{eq:unpluseps-int} and~\eqref{eq:unminuseps-int} should 
converge to zero and $S(t - t_0)g(t_0)$, respectively, as $\varepsilon \to 0$, 
in 
order to recover the integer-order case (formally, since we 
cannot expect the continuity of $\varepsilon \mapsto \mathfrak D^{k \pm 
\varepsilon} g(t_0)$ in general).
The following proposition states when these convergences hold:

\begin{proposition}\label{prop:reduction}
Let $t_0 \in \R$ and $g \in \Cb((-\infty, t_0]; X)$ be given. If 
Assumption~\ref{ass:bdd-semigroup}\ref{ass:bdd-semigroup:bdd} is satisfied, 
then for all 
$t \in (t_0, \infty)$ it holds that 
\begin{equation}
\frac{\sin(\pi \varepsilon)}{\pi}
\int_0^\infty 
\frac{\tau^{\varepsilon - 1}}{\tau + 1} S((t-t_0)(\tau + 1)) g(t_0 - 
(t-t_0)\tau)
\rd \tau
\to S(t-t_0)g(t_0)
\quad 
\text{as } \varepsilon \to 0.
\end{equation}
If, in addition,
Assumption~\ref{ass:bdd-semigroup}\ref{ass:bdd-semigroup:exp} is satisfied, 
then
\begin{equation}\label{eq:Prop33-Assertion2}
\frac{\sin(\pi \varepsilon)}{\pi}
\int_0^\infty 
\frac{\tau^{-\varepsilon}}{\tau + 1} S((t-t_0)(\tau + 1)) g(t_0 - 
(t-t_0)\tau)
\rd \tau
\to 0
\quad 
\text{ as } \varepsilon \to 0.
\end{equation}
\end{proposition}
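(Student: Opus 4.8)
The plan is to handle the two assertions separately, in each case bounding the $\LO(X)$-norm of the operator-valued integrand via~\eqref{eq:semigroup-est} and reducing to a scalar computation. Throughout I set $h \coloneqq t - t_0 > 0$.

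For the first assertion I would write $F(\tau) \coloneqq S\big(h(\tau+1)\big)\,g(t_0 - h\tau)$ for $\tau \ge 0$, so that the integral in question equals $\tfrac{\sin(\pi\varepsilon)}{\pi}\int_0^\infty \tfrac{\tau^{\varepsilon-1}}{\tau+1}\,F(\tau)\rd\tau$ and the claimed limit is $F(0) = S(h)g(t_0)$. The key structural fact is that $\big(\tau \mapsto \tfrac{\sin(\pi\varepsilon)}{\pi}\tfrac{\tau^{\varepsilon-1}}{\tau+1}\big)_{\varepsilon\in(0,1)}$ is an approximate identity concentrating at $\tau = 0$: applying~\eqref{eq:confHGF-incgamma} with $\alpha = 1-\varepsilon$ and $z = 0$, together with $\overline\Gamma(1-\varepsilon,0) = 1$ and $\sin(\pi(1-\varepsilon)) = \sin(\pi\varepsilon)$, gives $\tfrac{\sin(\pi\varepsilon)}{\pi}\int_0^\infty \tfrac{\tau^{\varepsilon-1}}{\tau+1}\rd\tau = 1$ for every $\varepsilon \in (0,1)$, while for each fixed $\delta > 0$ one has $\int_\delta^\infty \tfrac{\tau^{\varepsilon-1}}{\tau+1}\rd\tau \le \int_\delta^\infty \tau^{\varepsilon-2}\rd\tau = \tfrac{\delta^{\varepsilon-1}}{1-\varepsilon}$, so the mass on $[\delta,\infty)$ is $O(\sin(\pi\varepsilon)) \to 0$ as $\varepsilon\downarrow0$. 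Since $\|S(h(\tau+1))\|_{\LO(X)} \le M_0$ by~\eqref{eq:semigroup-est} (here $w \ge 0$) and $g$ is bounded, $F$ is bounded, and $F$ is continuous at $0$: the arguments $h(\tau+1)$ lie in $[h,\infty) \subset (0,\infty)$, where $\tau\mapsto S(h(\tau+1))$ is strongly continuous (cf.\ the remark after Assumption~\ref{ass:bdd-semigroup}), and $\tau \mapsto g(t_0 - h\tau)$ is continuous. Writing the difference with the target value as $\tfrac{\sin(\pi\varepsilon)}{\pi}\int_0^\infty \tfrac{\tau^{\varepsilon-1}}{\tau+1}\big(F(\tau) - F(0)\big)\rd\tau$ (using the normalization) and splitting at $\delta$ --- estimating the $[0,\delta]$-part by the modulus of continuity of $F$ at $0$ and the $[\delta,\infty)$-part by $2\|F\|_\infty$ times the escaping mass --- yields the first limit by the standard $\eta$-argument.

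For the second assertion I would use Assumption~\ref{ass:bdd-semigroup}\ref{ass:bdd-semigroup:exp}, so that $w > 0$. Bounding $\|S(h(\tau+1))\|_{\LO(X)} \le M_0 e^{-wh(\tau+1)}$ and $\|g(t_0 - h\tau)\|_X \le \|g\|_\infty$, then using $\tfrac{1}{\tau+1} \le 1$ together with $\int_0^\infty \tau^{-\varepsilon} e^{-wh\tau}\rd\tau = (wh)^{\varepsilon-1}\Gamma(1-\varepsilon)$, the expression in~\eqref{eq:Prop33-Assertion2} is bounded in $X$-norm by $\tfrac{\sin(\pi\varepsilon)}{\pi}\,M_0\,\|g\|_\infty\, e^{-wh}\,(wh)^{\varepsilon-1}\Gamma(1-\varepsilon)$, which tends to $0$ as $\varepsilon\downarrow0$ because $\tfrac{\sin(\pi\varepsilon)}{\pi}\to0$ while the remaining factors converge to $M_0\|g\|_\infty e^{-wh}(wh)^{-1}$. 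Equivalently, one recognizes $\tfrac{\sin(\pi\varepsilon)}{\pi}\int_0^\infty \tfrac{\tau^{-\varepsilon}}{\tau+1}e^{-wh(1+\tau)}\rd\tau = \overline\Gamma(\varepsilon, wh)$ via~\eqref{eq:confHGF-incgamma} and notes that $\overline\Gamma(\varepsilon, wh) = \Gamma(\varepsilon)^{-1}\int_{wh}^\infty \zeta^{\varepsilon-1}e^{-\zeta}\rd\zeta \to 0$ as $\varepsilon\downarrow0$, the integral remaining bounded uniformly in $\varepsilon\in(0,1)$ while $\Gamma(\varepsilon)\to\infty$.

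The main obstacle, modest as it is, lies in the measurability and integrability bookkeeping that makes these arguments legitimate: that the integrands $\tau\mapsto S(h(\tau+1))g(t_0-h\tau)$ are strongly measurable (indeed continuous), that $\tau^{\varepsilon-1}/(1+\tau)$ is integrable on $(0,\infty)$ for each fixed $\varepsilon\in(0,1)$ (near $0$ since $\varepsilon>0$, near $\infty$ since $\varepsilon-2<-1$), and --- crucially for the second assertion --- that $\tau^{-\varepsilon}/(1+\tau)$ becomes integrable at $\infty$ only once the exponential decay of $(S(t))_{t\ge0}$ is incorporated, which is exactly why Assumption~\ref{ass:bdd-semigroup}\ref{ass:bdd-semigroup:exp} is indispensable there. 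With these in place, the first limit follows from the approximate-identity argument and the second from the displayed one-line estimate.
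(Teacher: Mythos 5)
Your proposal is correct and follows essentially the same route as the paper: the first limit is obtained by recognizing $\frac{\sin(\pi\varepsilon)}{\pi}\frac{\tau^{\varepsilon-1}}{\tau+1}$ as a normalized approximate identity concentrating at $\tau=0$ (the paper merely repackages this as a convolution $[\psi_{t,t_0,\varepsilon}*f_{t,t_0}](0)$ and cites a textbook theorem, whereas you carry out the standard $\delta$-splitting by hand), and the second limit via the bound $M_0\,\overline\Gamma(\varepsilon, w(t-t_0))\,\norm{g}{\infty} \to 0$, which your direct estimate with $\Gamma(1-\varepsilon)(wh)^{\varepsilon-1}$ reproduces exactly.
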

\begin{proof}
Fix $t_0 \in \R$ and $t \in (t_0, \infty)$.
First we define the function $f_{t, t_0} \from \R\to X$ by 
\begin{equation}
f_{t, t_0}(r) \coloneqq 
\begin{cases}
	S(t - t_0 - r) g(t_0 + r), &r \in (-\infty, 0]; \\
	S(t - t_0) g(t_0), &r \in (0,\infty),
\end{cases}
\end{equation}
which is bounded and continuous at $r = 0$ by the assumptions on 
$(S(t))_{t\ge0}$ and~$g$.
Next, for any $\varepsilon \in (0,1)$ we define $\psi_{t, t_0, \varepsilon} 
\from 
\R\to 
[0,\infty)$ by 
\begin{equation}
\psi_{t, t_0, \varepsilon}(r) 
\coloneqq 
\frac{(t - t_0)^{1 - \varepsilon} \sin(\pi \varepsilon)}{\pi}
(r + (t - t_0))^{-1}_+ r^{\varepsilon - 1}_+, 
\quad 
r \in \R.
\end{equation}
Shifting the integration variable by $t - t_0$ and 
applying~\eqref{eq:Laplace} with $s = 1 - \varepsilon$ and $z = 0$, we find
\begin{align}
\int_\R \psi_{t, t_0, \varepsilon}(r) \rd r
=
\frac{(t - t_0)^{1 - \varepsilon} \sin(\pi \varepsilon)}{\pi}
\int_{t - t_0}^\infty 
r^{-1} (r - (t-t_0))^{\varepsilon - 1} \rd r
= 1.
\end{align}
Moreover, we have for any $\delta >0$:
\begin{equation}
\begin{aligned}
	&\int_{\{ \lvert r \rvert \ge \delta \}}
	\lvert 
	\psi_{t, t_0, \varepsilon}(r)
	\rvert
	\rd r
	=
	\frac{(t - t_0)^{1-\varepsilon}\sin(\pi \varepsilon)}{\pi} 
	\int_\delta^\infty 
	(r + t - t_0)^{-1} r^{\varepsilon - 1} \rd r
	\\
	&\quad\le 
	\frac{(t - t_0)^{1-\varepsilon}\sin(\pi \varepsilon)}{\pi} 
	\int_\delta^\infty 
	r^{\varepsilon - 2} \rd r
	=
	\frac{(t - t_0)^{1-\varepsilon}\delta^{\varepsilon - 1}\sin(\pi 
		\varepsilon)}{\pi(1 - \varepsilon)}
	\to
	\frac{(t - t_0)\cdot 0}{\pi \delta} 
	= 0
\end{aligned}
\end{equation}
as $\varepsilon \to 0$. Together, these observations show that the family 
$(\psi_{t, t_0, \varepsilon})_{\varepsilon\in(0,1)}$ forms an approximate 
identity as $\varepsilon \to 0$ in the sense 
of~\cite[Definition~1.2.15]{Grafakos2008}.
Since the change of variables $\sigma \coloneqq (t - 
t_0)\tau$ yields 
\begin{equation}
\frac{\sin(\pi \varepsilon)}{\pi}
\int_0^\infty 
\frac{\tau^{\varepsilon - 1}}{\tau + 1} S((t-t_0)(\tau + 1)) g(t_0 - 
(t-t_0)\tau)
\rd \tau
=
[\psi_{t, t_0, \varepsilon} * f_{t, t_0}](0),
\end{equation}
the first assertion now follows by applying the obvious vector-valued 
generalization 
of~\cite[Theorem~1.2.19(2)]{Grafakos2008}, which gives
\[
[\psi_{t, t_0, \varepsilon} * f_{t, t_0}](0) \to f_{t, t_0}(0) = 
S(t-t_0)g(t_0)
\quad 
\text{ as } \varepsilon \to 0.
\]

For the second assertion, suppose that 
Assumption~\ref{ass:bdd-semigroup}\ref{ass:bdd-semigroup:exp} holds. 
By Proposition~\ref{prop:mildsol-Dirichlet-bound-cont}, the left-hand side 
of~\eqref{eq:Prop33-Assertion2} is bounded above by $M_0 
\overline\Gamma(\varepsilon, 
w(t - t_0)) 
\norm{g}{\infty}$ 
for all $t \in (t_0, \infty)$. Since $w(t - t_0) > 0$, this expression 
tends to zero as $\varepsilon \to 0$.
\end{proof}
The final corollary concerns the choice $g \equiv x 
\in 
\dom{A^{(s-1) \vee 0}}$ in Definition~\ref{def:mildsol-Dirichlet}, in which 
case the solution can be expressed in terms of an operator-valued 
counterpart of the upper 
incomplete gamma function.
Namely, for $\alpha \in (0,1)$ and $t \in (0,\infty)$, we use the Phillips 
calculus 
from Section~\ref{section:prelims:flips} to define
\begin{equation}\label{eq:def-incgammaop-01}
\overline\Gamma(\alpha, tA) 
\coloneqq [z \mapsto \overline\Gamma(\alpha, tz)](A)
= 
\cL\biggl[ 
\frac{t^\alpha \sigma^{-1}_+ (\sigma - 
t)^{-\alpha}_+}{\Gamma(\alpha)\Gamma(1-\alpha)} 
\rd\sigma
\biggr](A)
\in \LO(X),
\end{equation}
see~\eqref{eq:Laplace}. For 
$\alpha \in [1, \infty)$, such a Laplace 
transform representation 
is no longer available, so in this case we instead define 
$\overline\Gamma(\alpha, tA)$
by analogy with~\eqref{eq:incgamma-recurrence}:
\begin{equation}\label{eq:def-incgamma-s1infty}
\overline\Gamma(\alpha, tA)x
\coloneqq
\overline{\Gamma}(\{\alpha\}, tA)x
+
\sum_{k=1}^{\lfloor \alpha \rfloor} 
\frac{t^{k+\{\alpha\}-1}}{\Gamma(k+\{\alpha\})} A^{k+\{\alpha\}-1} S(t)x, 
\quad
x \in \dom{A^{\alpha - 1}}.
\end{equation}
For $t = 0$ we set $\overline\Gamma(\alpha, 0 A) \coloneqq \id_X$.
Although $\overline\Gamma(\alpha, tA)$ is unbounded in 
general, under the additional
Assumption~\ref{ass:bdd-semigroup}\ref{ass:bdd-semigroup:ana}
we have, cf.~\cite[Chapter~2, Theorem~6.13(c)]{Pazy1983}:
\begin{equation}\label{eq:analytic-semigroup-est}
\forall \beta \in [0, \infty), \:
\exists M_\beta \in [1,\infty) : \:\:
\norm{A^\beta S(t)}{\LO(X)} \le M_\beta t^{-\beta} e^{-wt}, 
\quad \forall t \in (0,\infty).
\end{equation}
Putting these observations together, we obtain the following formula for the 
solution with initial datum $g \equiv x$:
\begin{corollary}\label{cor:g-equiv-x}
Suppose Assumption~\ref{ass:bdd-semigroup}\ref{ass:bdd-semigroup:bdd} holds.
Let $s \in (0,\infty) \setminus \N$, $p \in [1,\infty]$ and ${t_0 \in \R}$.
If $g \equiv x$ for some given $x \in \dom{A^{(s-1) \vee 0}}$ and $s \in 
(0,1)$ or Assumption~\ref{ass:bdd-semigroup}\ref{ass:bdd-semigroup:exp} 
holds, then the 
solution $u$ 
to~\eqref{eq:natural-Dirichlet-Ds} from
Definition~\ref{def:mildsol-Dirichlet} becomes
\begin{equation}\label{eq:mildsol-gx}
u(t) 
= 
\overline\Gamma(s, (t - t_0)A)x, 
\quad 
\forall t \in (t_0, \infty).
\end{equation}
If, in addition, $s \in (0,1)$ or 
Assumption~\ref{ass:bdd-semigroup}\ref{ass:bdd-semigroup:ana} is satisfied, 
then
\begin{equation}\label{eq:mildsol-gx-est}
\norm{u(t)}{X} 
\le 
\biggl[
M_0
\overline\Gamma(\{s\}, w(t - t_0))
+ 
e^{-w(t - t_0)}
\sum_{k=1}^{\lfloor s \rfloor}
\frac{M_{\{s\} + k - 1}}{\Gamma(\{s\} + k)}\biggr] 
\norm{x}{X}, 
\end{equation}
where $w \in [0,\infty)$ and $M_0, M_{\{s\}}, \dots, M_s \in [1,\infty)$ 
are as in~\eqref{eq:semigroup-est} 
and~\eqref{eq:analytic-semigroup-est}.
\end{corollary}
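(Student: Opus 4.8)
The plan is to insert the constant datum $g\equiv x$ into the solution formula~\eqref{eq:Es} and to recognize the two resulting summands as the two constituents of the operator-valued incomplete gamma function from~\eqref{eq:def-incgammaop-01}--\eqref{eq:def-incgamma-s1infty}. For the first (integral) summand, observe that with $g\equiv x$ the factor $g(t_0-(t-t_0)\tau)$ collapses to $x$, so it becomes $\frac{\sin(\pi\{s\})}{\pi}\int_0^\infty\frac{\tau^{-\{s\}}}{\tau+1}S((t-t_0)(\tau+1))x\,\rd\tau$; applying the substitution $\sigma\coloneqq(t-t_0)(\tau+1)$ --- exactly the one carried out in the passage from~\eqref{eq:confHGF-incgamma} to~\eqref{eq:Laplace}, now with exponent $\{s\}$ and with $t$ replaced by $t-t_0$ --- rewrites this as $\int_{[0,\infty)}S(\sigma)x\,\rd\mu(\sigma)$, where $\mu$ is the Borel measure displayed in~\eqref{eq:def-incgammaop-01}, which has finite total variation (in fact equal to $\overline\Gamma(\{s\},0)=1$) since $\{s\}\in(0,1)$; by the definition of the Phillips calculus this equals $\overline\Gamma(\{s\},(t-t_0)A)x$. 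This step uses only Assumption~\ref{ass:bdd-semigroup}\ref{ass:bdd-semigroup:bdd}, and when $s\in(0,1)$ --- where the sum in~\eqref{eq:Es} is empty and $\{s\}=s$ --- it already establishes~\eqref{eq:mildsol-gx}.

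For the second summand (the case $s>1$) one has to compute $\fD^{\{s\}+k-1}g(t_0)$ for $k\in\{1,\dots,\lfloor s\rfloor\}$. Since $\{s\}+k-1$ has ceiling $k$, definition~\eqref{eq:def-RL-fracpar-deriv-R} reads $\fD^{\{s\}+k-1}g=(\partial_t+A)^k\fI^{1-\{s\}}g$. For the constant function $g\equiv x$, formula~\eqref{eq:neg-frac-power} --- available under Assumption~\ref{ass:bdd-semigroup}\ref{ass:bdd-semigroup:exp} --- identifies $\fI^{1-\{s\}}g$ with the constant $A^{-(1-\{s\})}x$; applying $(\partial_t+A)^k$, under which $\partial_t$ annihilates constants while $A$ acts pointwise, yields $A^kA^{-(1-\{s\})}x=A^{\{s\}+k-1}x$, meaningful exactly when $x\in\dom{A^{\{s\}+k-1}}$ --- the borderline case $k=\lfloor s\rfloor$ being the standing hypothesis $x\in\dom{A^{(s-1)\vee0}}$. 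The same computation with exponent $(s-1)\vee0$ shows $\fD^{(s-1)\vee0}g\equiv A^{(s-1)\vee0}x$ is a constant, hence bounded and continuous, so $g\equiv x$ indeed meets the hypotheses of Definition~\ref{def:mildsol-Dirichlet}. Plugging $\fD^{\{s\}+k-1}g(t_0)=A^{\{s\}+k-1}x$ into~\eqref{eq:Es} and commuting $S(t-t_0)A^{\{s\}+k-1}x=A^{\{s\}+k-1}S(t-t_0)x$, the sum becomes $\sum_{k=1}^{\lfloor s\rfloor}\frac{(t-t_0)^{\{s\}+k-1}}{\Gamma(\{s\}+k)}A^{\{s\}+k-1}S(t-t_0)x$, which is exactly the summation in~\eqref{eq:def-incgamma-s1infty} with $\alpha=s$ and $t$ replaced by $t-t_0$. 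Combined with the previous step this proves~\eqref{eq:mildsol-gx}.

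For the estimate, take norms in~\eqref{eq:mildsol-gx} via~\eqref{eq:def-incgamma-s1infty}: the contribution of $\overline\Gamma(\{s\},(t-t_0)A)x$ is controlled through its Laplace representation~\eqref{eq:def-incgammaop-01}, the bound $\norm{S(\sigma)}{\LO(X)}\le M_0e^{-w\sigma}$ from~\eqref{eq:semigroup-est}, and identity~\eqref{eq:Laplace} with $z=w$, producing $M_0\overline\Gamma(\{s\},w(t-t_0))\norm{x}{X}$; each summand (for $s>1$, now invoking Assumption~\ref{ass:bdd-semigroup}\ref{ass:bdd-semigroup:ana}) is estimated by applying~\eqref{eq:analytic-semigroup-est} with $\beta=\{s\}+k-1$, whereupon the factor $(t-t_0)^{\{s\}+k-1}$ cancels and $\frac{M_{\{s\}+k-1}}{\Gamma(\{s\}+k)}e^{-w(t-t_0)}\norm{x}{X}$ remains. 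Summing over $k$ delivers~\eqref{eq:mildsol-gx-est}, the sum being vacuous --- and Assumption~\ref{ass:bdd-semigroup}\ref{ass:bdd-semigroup:bdd} alone sufficing --- when $s\in(0,1)$.

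I expect the only genuinely delicate point to be the assertion that $\fD^\beta$ sends the constant $x$ to the constant $A^\beta x$ for the relevant exponents $\beta$: this hinges on the preservation of constants by $\fI^\gamma$ via~\eqref{eq:neg-frac-power}, on the reduction of $(\partial_t+A)^n$ to $A^n$ on constant functions together with the accompanying domain identifications for fractional powers, and on verifying that $g\equiv x$ really lies within the scope of Definition~\ref{def:mildsol-Dirichlet}. Everything else amounts to the single change of variables $\sigma=(t-t_0)(\tau+1)$ and the routine norm estimates supplied by~\eqref{eq:semigroup-est} and~\eqref{eq:analytic-semigroup-est}.
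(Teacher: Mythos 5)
Your proposal is correct and follows essentially the same route as the paper's proof: substitute $g \equiv x$, use the change of variables $\sigma = (t-t_0)(1+\tau)$ together with~\eqref{eq:Laplace} and the Phillips calculus to identify the integral term with $\overline\Gamma(\{s\},(t-t_0)A)x$, compute $\fD^\beta g \equiv A^\beta x$ via~\eqref{eq:neg-frac-power} under Assumption~\ref{ass:bdd-semigroup}\ref{ass:bdd-semigroup:exp}, and obtain the bound from~\eqref{eq:semigroup-est} and~\eqref{eq:analytic-semigroup-est}. Your added checks (that $g \equiv x$ meets the hypotheses of Definition~\ref{def:mildsol-Dirichlet} and that the measure in~\eqref{eq:def-incgammaop-01} has finite total variation) are sound refinements of details the paper leaves implicit.
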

\begin{proof}
The substitution $g \equiv x$ and 
the change of variables $\sigma \coloneqq (t - t_0)(1 + \tau)$ in the first 
term of~\eqref{eq:Es} produces 
\begin{equation}
\frac{\sin(\pi\{s\})}{\pi} (t-t_0)^{\{s\}}
\int_{t-t_0}^\infty \sigma^{-1} (\sigma - (t-t_0))^{-\{s\}}
S(\sigma) \rd \sigma \,
x,
\end{equation}
which is equal to $\overline\Gamma(\{s\}, (t - t_0)A)x$ 
by~\eqref{eq:Laplace}. 

Now suppose that $s \in (1,\infty)$ and let 
Assumption~\ref{ass:bdd-semigroup}\ref{ass:bdd-semigroup:exp}
be satisfied. Then for any $\alpha \in [0,\infty)$ we have
\[
\fI^\alpha g(t) = \frac{1}{\Gamma(\alpha)} \int_0^\infty 
\tau^{\alpha - 1} S(\tau) x \rd \tau = A^{-\alpha}x
\]
by~\eqref{eq:neg-frac-power}, so that
\[
\fD^\beta g(t) 
= 
(\partial_t + A)^{\lceil \beta \rceil} 
\fI^{\lceil \beta \rceil - \beta} g(t)
=
A^{\lceil \beta \rceil} 
A^{\beta - \lceil \beta \rceil} x
= A^{\beta}x
\]
for all $\beta \in [0, s - 1]$, hence the remaining terms of~\eqref{eq:Es} 
become 
\begin{equation}
\sum_{k=1}^{\lfloor s \rfloor}
\frac{(t-t_0)^{\{s\} + k - 1}}{\Gamma(\{s\} + k)}
S(t-t_0) 
A^{\{s\} + k - 1} x,
\end{equation}
proving~\eqref{eq:mildsol-gx} in view of~\eqref{eq:def-incgamma-s1infty}.
Estimate~\eqref{eq:mildsol-gx-est} follows from~\eqref{eq:semigroup-est} 
(and~\eqref{eq:analytic-semigroup-est}).
\end{proof}
%
\subsection{Proof of the relation between mild solutions and \texorpdfstring{$L^p$}{L\^{}p}-solutions}
\label{sec:Dirichlet-frac-parab:Lp-implies-mild}

The aim of this section is to prove Theorem~\ref{thm:Lp-implies-mild}. We 
will prove the 
integer-order and fractional-order cases separately in the following two 
subsections. Before proceeding to do so, we make a preliminary observation 
which applies to both cases:

If Assumption~\ref{ass:bdd-semigroup}\ref{ass:bdd-semigroup:exp} holds, and
$u \in \dom{\fD^s}$ is an $L^p$-solution 
to~\eqref{eq:natural-Dirichlet-Ds} for some $t_0 \in \R$, $p \in [1,\infty]$ and
$s \in (0, \infty)$ satisfying~\eqref{eq:sp-Ds-cont}, then 
$u \in \Cb(\R; X)$ by Corollary~\ref{cor:dom-Ds-continuity}. Moreover, if 
$s \ge 1$, then we have $\fD^{s-1} u \in \dom{\fD^1}$ by the 
definition~\eqref{eq:def-RL-fracpar-deriv-R}, and thus $\fD^{s-1} u \in 
\Cb(\R; X)$ by applying Corollary~\ref{cor:dom-Ds-continuity} once more. Since 
$g \equiv u$ on $(-\infty, t_0]$, this shows that the continuity properties of 
$g$ are as in Definition~\ref{def:mildsol-Dirichlet}.

\subsubsection{Integer-order case}
\label{sec:Lp-implies-mild:integer-order}

Let us consider the case $s = n \in \N$, in which the operator $\fD^n = 
(\partial_t + A)^n$ is local in time. Note that the first-order case $s = 1$ 
was already treated in Section~\ref{sec:11st-order-ACP}. The following 
proposition is the key ingredient in the proof of 
Theorem~\ref{thm:Lp-implies-mild} for $s = n$.

\begin{proposition}\label{prop:InDn}
Suppose that Assumption~\ref{ass:bdd-semigroup}\ref{ass:bdd-semigroup:exp} 
holds. Let $n \in \N$, $p \in [1,\infty]$, $t_0 \in \R$ and 
$u \in \dom{(\partial_t + A)^n}$. 
For all $t \in (t_0, \infty)$, we have
\begin{equation}\label{eq:InDn}
\mathfrak I^n_{t_0} (\partial_t + A)^n u(t)
= 
u(t) 
-
\sum_{k=0}^{n-1} 
\frac{(t-t_0)^k}{k!} S(t-t_0) [(\partial_t + A)^k u](t_0).
\end{equation}
Here, $\fI_{t_0}^s \in \LO(L^p(t_0, \infty; X))$ denotes the 
(Riemann--Liouville type) fractional 
parabolic 
integral, defined by
\begin{equation}\label{eq:def-RL-int}
\fI_{t_0}^s u(t) 
\coloneqq 
\frac{1}{\Gamma(s)}\int_{t_0}^t (t-\tau)^{s-1} S(t-\tau) u(\tau) \rd 
\tau, 
\quad 
u \in L^p(J; X), \; 
\text{a.e.\ } t \in (t_0, \infty).
\end{equation}
\end{proposition}
\begin{proof}
We use induction on $n \in \N$.
For the base case $n = 1$, let us fix an arbitrary
$u \in W^{1,p}(J; X) \cap L^p(J; \dom{A}) \hookrightarrow 
\Cb(\clos{J}; X)$. 
Since $u(t) \in \dom{A}$ a.e., we find in particular that $u(t_0) \in 
\clos{\dom{A}}$.
Now the result follows by applying 
Proposition~\ref{prop:ACP-strong-vs-mild}\ref{prop:ACP-strong-vs-mild:a}$\implies$\ref{prop:ACP-strong-vs-mild:b}
with 
$f \coloneqq u' + Au \in L^p(J; X)$.

Now suppose that the statement is true for some $n \in \N$.
We present the argument for $t_0 = 0$, the other cases being analogous.
Fix $u \in \dom{(\partial_t + A)^{n+1}}$ and 
apply the induction hypothesis to the function
$(\partial_t + A) u \in \dom{(\partial_t + A)^{n}}$,
yielding
\begin{equation}
(\partial_t + A) u(t) 
= 
\mathfrak I^n_{t_0} (\partial_t + A)^{n+1} u(t)
+
\sum_{k=0}^{n-1} \frac{t^k}{k!} S(t) [(\partial_t + A)^{k+1} u](0)
\end{equation}
for all $t \in \clos J$. Applying $\mathfrak I^1$ to both sides of the
above equation, we can use the case $n = 1$ along with 
Propositions~\ref{prop:frac-par-int-props}\ref{prop:frac-par-int-props:semigroup}--%
\ref{prop:frac-par-int-props:invariant-funcs}
to find
\begin{align}
u(t) 
&= 
\mathfrak I^{n+1}_{t_0} (\partial_t + A)^{n+1} u(t)
+
\sum_{k=0}^{n-1} 
\frac{t^{k+1}}{(k+1)!} S(t) [(\partial_t + A)^{k+1} u](0) 
+ 
S(t)u(0)
\\
&=
\mathfrak I^{n+1}_{t_0} (\partial_t + A)^{n+1} u(t)
+
\sum_{k=0}^{n} \frac{t^{k}}{k!} S(t) [(\partial_t + A)^k u](0). 
\tag*{\qedhere}
\end{align}
\end{proof}
We can now prove the integer-order case of 
Theorem~\ref{thm:Lp-implies-mild}:
\begin{proof}[Proof of Theorem~\ref{thm:Lp-implies-mild} (for $s = n \in\N$)]
Let $u$ be an $L^p$-solution to~\eqref{eq:natural-Dirichlet-Ds}. By 
Definition~\ref{def:mildsol-Dirichlet} and the 
observations in the beginning of 
Section~\ref{sec:Dirichlet-frac-parab:Lp-implies-mild}, we have $u \in 
\Cb(\R; X)$, $u \equiv g$ on $(-\infty, t_0]$ and $(\partial_t + A)^n u = 
0$ a.e.\ on $(t_0, \infty)$.
Applying $\fI_{t_0}^s$ to both sides of the latter and using 
Proposition~\ref{prop:InDn} on the left-hand side, we find for all $t \in 
(t_0, \infty)$:
\begin{equation}
u(t) 
=
\sum_{k=0}^{n-1} 
\frac{(t-t_0)^k}{k!} S(t-t_0) [(\partial_t + A)^k u](t_0).
\end{equation}
Note that the operators $(\partial_t + A)^k$ are local in time and 
that, in 
fact, we can choose to interpret $\partial_t$ as a left derivative. Thus, 
since $u \equiv g$ on $(-\infty, t_0]$, we obtain~\eqref{eq:Es-n}.
\end{proof}
If $u \in \dom{(\partial_t + A)^k}$ is sufficiently regular, say if 
$u \in C^j(\clos J; \mathsf D(A^{k-j}))$ ($j$ times continuously differentiable)
for all $j \in \{0,\dots, k\}$, then we have the pointwise binomial 
expansion 
\[
[(\partial_t + A)^k u](t)
=
\sum_{j=0}^{k}
\binom{k}{j} A^{k-j} u^{(j)}(t),
\quad 
\forall 
t \in \clos J,
\]
where $u^{(j)}$ denotes the $j$th (classical) derivative of $u$.
Substituting this into~\eqref{eq:InDn}, using the definition of binomial 
coefficients, interchanging the order of summation and shifting the inner 
summation index yields
\begin{align*}
\sum_{k=0}^{n-1} &\frac{t^k}{k!}
S(t)[(\partial_t + A)^k u](0)
=
\sum_{k=0}^{n-1}
\sum_{j=0}^{k}
\frac{t^k}{j!(k-j)!}
S(t) A^{k-j} u^{(j)}(0)
\\
&=
\sum_{j=0}^{n-1}
\sum_{k=j}^{n-1}
\frac{t^k}{j!(k-j)!}
S(t) A^{k-j} u^{(j)}(0)
=
\sum_{j=0}^{n-1}
\frac{t^j}{j!}
\sum_{\ell=0}^{n-j-1}
\frac{t^\ell}{\ell!}
S(t) A^{\ell} u^{(j)}(0).
\end{align*}
Moreover, note that for $n \in \N$, $x 
\in \dom{A^{n-1}}$ and $t \in 
(0,\infty)$ we have
\begin{equation}
\overline\Gamma(n, tA)x 
= 
\sum_{k=0}^{n-1} \frac{t^k}{k!} S(t) A^k x,
\end{equation}
cf.~\eqref{eq:incgamma-recurrence}. Together, these observations imply 
that, in this situation, equation~\eqref{eq:InDn} takes on the following form:
\begin{equation}\label{eq:Int0-dtAn-Gamma}
\mathfrak I^n_{t_0} (\partial_t + A)^n u(t)
= 
u(t) 
-
\sum_{k=0}^{n-1}
\frac{(t-t_0)^k}{k!}
\overline\Gamma(n-k, (t-t_0)A) x_k.
\end{equation}

\subsubsection{Fractional-order case}

Now we turn to the proof of Theorem~\ref{thm:Lp-implies-mild} in the case 
that $s \in (0,\infty) \setminus \N$. It relies on the following result, which 
is the fractional-order analog to Proposition~\ref{prop:InDn}.

\begin{theorem}\label{thm:extension}
Suppose that Assumption~\ref{ass:bdd-semigroup}\ref{ass:bdd-semigroup:exp} 
holds.
Let $f \in L^p(-\infty, t_0; X)$ for some $p \in [1,\infty]$ and 
$t_0 \in \R$, and let $\widetilde f \in L^p(\R; X)$ denote its 
extension by zero to the whole of $\R$.
Let $s \in (0,\infty) \setminus \N$ be such that~\eqref{eq:sp-Ds-cont} 
is 
satisfied.
Then the following 
identity holds for 
all $t \in (t_0, \infty)$:
\begin{align}
\fI^s \widetilde{f}(t)
= 
\frac{\sin(\pi\{s\})}{\pi}
&\int_0^\infty 
\frac{\tau^{-\{s\}}}{\tau + 1} S((t-t_0)(\tau + 1)) 
\fI^s f(t_0 - (t-t_0)\tau)
\rd \tau
\\&+
\sum_{k=1}^{\lfloor s \rfloor}
\frac{(t-t_0)^{\{s\} + k - 1}}{\Gamma(\{s\} + k)}
S(t-t_0) \fI^{\lfloor s \rfloor - k + 1} f(t_0).
\label{eq:Is-ftilde}
\end{align}
\end{theorem}
Before proving this result, we show how it can indeed be 
used to finish the proof of Theorem~\ref{thm:Lp-implies-mild}:
\begin{proof}[Proof of Theorem~\ref{thm:Lp-implies-mild} ($s \in 
(0,\infty) \setminus \N$)]
Let $u \in \dom{\fD^s}$ be an $L^p$-solution 
to~\eqref{eq:natural-Dirichlet-Ds}. Applying $\fD^s$ to
the second line of~\eqref{eq:natural-Dirichlet-Ds} yields
$
\fD^s u = \fD^s g
$
a.e.\ on $(-\infty, t_0)$.
Combined with the first line, i.e., 
$\fD^s u = 0$ a.e.\ on $(t_0, \infty)$, we 
find
\[
\fD^s u = \widetilde f \quad \text{a.e.\ on } \R,
\]
where $f \coloneqq \fD^s g \in 
L^p(-\infty, t_0; X)$. Now we apply $\fI^s$ to both sides of this 
equation, and use Proposition~\ref{prop:ID-DI-s}\ref{prop:ID-DI-s:ID} and 
Theorem~\ref{thm:extension} to the left-hand and right-hand sides, 
respectively. Together, this yields, for all $t \in (t_0, \infty)$,
\begin{equation}
u(t)
=
\begin{aligned}[t]
	\frac{\sin(\pi\{s\})}{\pi}
	&\int_0^\infty 
	\frac{\tau^{-\{s\}}}{\tau + 1} S((t-t_0)(\tau + 1)) 
	\fI^s [\fD^s g](t_0 - (t-t_0)\tau)
	\rd \tau
	\\&+
	\sum_{k=1}^{\lfloor s \rfloor}
	\frac{(t-t_0)^{\{s\} + k - 1}}{\Gamma(\{s\} + k)}
	S(t-t_0) \fI^{\lfloor s \rfloor - k + 1} [\fD^s g](t_0).
\end{aligned}
\end{equation}
In order to conclude that $u$ satisfies equation~\eqref{eq:Es}, it 
remains to observe that $\fI^s \fD^s g = g$ and 
$\fI^{\lfloor s \rfloor - k + 1} \fD^s g 
= \fD^{s - (\lfloor s \rfloor - k + 1)} g 
= \fD^{\{s\} + k - 1} g$ for all $k \in \{1, \dots, \lfloor s \rfloor\}$. 
Indeed, the former follows from the natural analog of 
Proposition~\ref{prop:ID-DI-s}\ref{prop:ID-DI-s:ID} for functions defined 
on $(-\infty, t_0]$; for the latter, let $m \in \N_0$ be such that 
$m \le \lfloor s \rfloor$, for which we have
\begin{align}
\fI^m \fD^s g
=
\fI^m \fD^{\lfloor s \rfloor} \fI^{\lfloor s \rfloor - s}g
&=
\fI^m \fD^m \fD^{\lfloor s \rfloor - m}\fI^{\lfloor s \rfloor - s}g
=
\fD^{\lfloor s \rfloor - m}\fI^{\lfloor s \rfloor - m - (s - m)}g
\\
&= 
\fD^{\lfloor s - m \rfloor}\fI^{\lfloor s - m \rfloor - (s - m)}g
=
\fD^{s - m} g.
\end{align}
Here, we used the definition of $\fD^s$, the additivity of 
integer powers of $\fD$, and the aforementioned analog to 
Proposition~\ref{prop:ID-DI-s}\ref{prop:ID-DI-s:ID}. This completes the 
proof that $u$ is a mild solution in the sense of 
Definition~\ref{def:mildsol-Dirichlet}.
\end{proof}
The proof of Theorem~\ref{thm:extension} involves expressing 
an integral in terms of 
fractional binomial coefficients, given by~\cite[Equations~(1.2.6), 
(5.2.4) and~(5.2.5)]{Olver2010}:
\begin{equation}\label{eq:frac-binom-coeff}
\binom{\alpha}{k}
\coloneqq 
\frac{1}{k!}
\prod_{\ell = 0}^{k-1}(\alpha - \ell)
=
\frac{\Gamma(\alpha + 1)}{k! \, \Gamma(\alpha - k + 1)},
\quad 
\alpha \in (0,\infty), \, k \in \N_0.
\end{equation}
Since the author is not aware of a direct reference for the following integral 
identity, 
a 
proof is presented below for the sake of self-containedness.
\begin{lemma}\label{lem:hgf-integral}
For $\alpha \in (0,1)$, $a, b \in (0,\infty)$
and $n \in \N_0$ we have
\begin{equation}\label{eq:nasty-integral}
\begin{aligned}
	\frac{\sin(\pi \alpha)}{\pi}
	\int_0^{\nicefrac{a}{b}} &\frac{\tau^{-\alpha}(a - b 
		\tau)^{\alpha+n-1}}{\tau+1} \rd \tau
	\\&=
	(a + b)^{\alpha + n - 1}
	-
	\sum_{k=1}^{n} 
	\binom{\alpha + n - 1}{n - k}
	a^{n-k} b^{\, k+\alpha-1}.
\end{aligned}
\end{equation}
\end{lemma}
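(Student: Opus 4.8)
The plan is to reduce the integral to a standard Euler-type integral representation of the Gauss hypergeometric function, or equivalently of the incomplete beta function, and then expand. First I would substitute $\tau = \frac{a}{b}\rho$ so that the integral over $\tau \in (0, a/b)$ becomes an integral over $\rho \in (0,1)$; after pulling out the powers of $a$ and $b$ this gives a multiple of $\int_0^1 \rho^{-\alpha}(1-\rho)^{\alpha+n-1}(\rho + b/a)^{-1}\rd\rho$, which is (up to constants) the Euler integral for ${}_2F_1\bigl(1, 1-\alpha; n+1; \tfrac{a}{a+b}\bigr)$ or, more conveniently, matches~\eqref{eq:confHGF-incgamma} and~\eqref{eq:Laplace} with a polynomial factor inserted. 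Since the case $n = 0$ is exactly~\eqref{eq:confHGF-incgamma} evaluated at a suitable point (the sum on the right is empty and the left-hand side reduces to $(a+b)^{\alpha-1}$), I would treat $n = 0$ as the base case and then induct on $n$.

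For the inductive step, the key algebraic identity is $(a - b\tau)^{\alpha + n - 1} = (a-b\tau)(a-b\tau)^{\alpha + n - 2} = a(a-b\tau)^{\alpha+n-2} - b\tau (a - b\tau)^{\alpha + n - 2}$. Using this to split the integral $I_n$ into $a I_{n-1}$ minus a term $\frac{\sin(\pi\alpha)}{\pi}\int_0^{a/b} \frac{\tau^{1-\alpha}(a-b\tau)^{\alpha+n-2}}{\tau+1}\rd\tau$; in the latter, writing $\tau = (\tau + 1) - 1$ splits off a piece with the $(\tau+1)$ denominator cancelled — a pure Beta integral $\frac{\sin(\pi\alpha)}{\pi}\int_0^{a/b}\tau^{-\alpha}(a-b\tau)^{\alpha+n-2}\rd\tau$, evaluable in closed form via the Beta function and the reflection formula $\Gamma(\alpha)\Gamma(1-\alpha) = \pi/\sin(\pi\alpha)$ — plus a leftover $-I_{n-1}$ term. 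Collecting terms yields a recurrence $I_n = (a+b)I_{n-1} - (\text{explicit Beta term})$, and then I would verify that the claimed right-hand side of~\eqref{eq:nasty-integral} satisfies the same recurrence, using the Pascal-type identity ${\alpha + n - 1 \choose n-k} = {\alpha+n-2\choose n-k} + {\alpha + n-2 \choose n-k-1}$ for the fractional binomial coefficients defined in~\eqref{eq:frac-binom-coeff}, together with the explicit evaluation of the Beta integral matching the $k = n$ term.

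The main obstacle I anticipate is bookkeeping: matching the closed-form Beta-function evaluation $\int_0^{a/b}\tau^{-\alpha}(a-b\tau)^{\beta}\rd\tau = a^{\beta+1-\alpha} b^{\alpha - 1} \mathrm{B}(1-\alpha, \beta+1)$ against the fractional binomial coefficients and the $(a+b)^{\alpha+n-1}$ term, and making sure the index shifts in the sum $\sum_{k=1}^n$ line up correctly under the recurrence (the new $k = n$ term appears from the Beta integral while the old terms get multiplied through by $(a+b)$ and recombined via Pascal's rule). An alternative that avoids induction would be to recognize the integral directly as an incomplete-beta / hypergeometric function and apply a known connection formula expressing ${}_2F_1(1, 1-\alpha; n+1; z)$ as $z^{-n}$ times a finite sum plus a remainder; but since the paper wants to be self-contained and already has~\eqref{eq:confHGF-incgamma} and~\eqref{eq:frac-binom-coeff} available, the induction route is cleaner to write out. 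Either way, convergence of the integral near $\tau = 0$ is guaranteed by $\alpha \in (0,1)$, and the endpoint $\tau = a/b$ contributes no boundary issue since $\alpha + n - 1 > -1$.
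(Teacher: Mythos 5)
Your proposal is correct and follows essentially the same route as the paper: reduce to the unit interval, induct on $n$ with an Euler/Beta-type integral as base case, split the integrand so that one piece reproduces $(a+b)$ times the previous integral and the other is a pure Beta integral with the denominator cancelled, and recombine via Pascal's rule for the fractional binomial coefficients~\eqref{eq:frac-binom-coeff} (your two-step split of $(a-b\tau)^{\alpha+n-1}$ followed by $\tau = (\tau+1)-1$ is algebraically identical to the paper's single step $1-\sigma = (1+\tfrac{b}{a})-(\sigma+\tfrac{b}{a})$). One small correction: the base case is not an evaluation of~\eqref{eq:confHGF-incgamma}, which involves $e^{-z(1+\tau)}$ on $(0,\infty)$, but rather the classical algebraic Euler integral $\frac{1}{\Gamma(\alpha)\Gamma(1-\alpha)}\int_0^1 \sigma^{-\alpha}(1-\sigma)^{\alpha-1}(\sigma+c)^{-1}\rd\sigma = (1+c)^{\alpha-1}c^{-\alpha}$, i.e.\ the hypergeometric representation you correctly anticipate in your ${}_2F_1$ remark.
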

\begin{proof}
By the change of variables $\sigma \coloneqq \frac{b}{a}\tau$
and~\cite[Equation~(5.5.3)]{Olver2010}, 
the validity of the identity~\eqref{eq:nasty-integral} is
equivalent to that of
\begin{equation}\label{eq:nasty-integral-changeofvars}
\begin{aligned}
	\frac{1}{\Gamma(1-\alpha)\Gamma(\alpha)}
	\int_0^1 &\frac{\sigma^{-\alpha}(1-\sigma)^{\alpha+n-1}}{\sigma + 
		\frac{b}{a}}\rd \sigma
	\\&=
	a^{1-n} b^{-\alpha} (a + b)^{\alpha+n-1}
	-
	\sum_{k=1}^{n} 
	\binom{\alpha + n - 1}{n - k} \biggl(\frac{b}{a}\biggr)^{\! k-1}.
\end{aligned}
\end{equation}
We will verify this identity using induction on $n \in \N_0$. The base case
$n = 0$ is a consequence of~\cite[Equations~(5.12.4)
and~(5.12.1)]{Olver2010}:
\begin{equation}
\frac{1}{
	\Gamma(1-\alpha)\Gamma(\alpha)}\int_0^1 
\frac{\sigma^{-\alpha}(1-\sigma)^{\alpha-1}}{\sigma + \frac{b}{a}}\rd 
\sigma
=
\left( 1 + \tfrac{b}{a}\right)^{\! \alpha - 1} 
\left(\tfrac{b}{a}\right)^{\!-\alpha}
=
a b^{-\alpha} (a+b)^{\alpha-1}.
\end{equation}
Now suppose that~\eqref{eq:nasty-integral-changeofvars} holds for a given $n
\in \N_0$. In order to establish the identity for $n + 1$, we write 
$1 - \sigma = 1 + \frac{b}{a} - (\sigma + \frac{b}{a})$ and
apply the induction hypothesis
and~\cite[Equation~(5.12.1)]{Olver2010}, respectively, to the resulting two 
integrals:
\begin{equation}
\begin{aligned}
	&\frac{1}{\Gamma(1-\alpha)\Gamma(\alpha)}
	\int_0^1 \frac{\sigma^{-\alpha}(1-\sigma)^{\alpha+n}}{\sigma + 
		\frac{b}{a}}\rd \sigma
	\\&=
	\begin{aligned}[t]
		&\frac{1 + \frac{b}{a}}{\Gamma(1-\alpha)\Gamma(\alpha)}
		\int_0^1 \frac{\sigma^{-\alpha}(1-\sigma)^{\alpha+n-1}}{\sigma + 
			\frac{b}{a}}\rd \sigma
		\\&-
		\frac{1}{\Gamma(1-\alpha)\Gamma(\alpha)}
		\int_0^1 \sigma^{-\alpha}(1-\sigma)^{\alpha+n-1}\rd \sigma
	\end{aligned}
	\\
	&=
	a^{-n} b^{-\alpha} (a + b)^{\alpha + n}
	-
	\left(\! 1 + \frac{b}{a}\right)
	\sum_{k=1}^{n} 
	\binom{\alpha + n - 1}{n - j} \biggl(\frac{b}{a}\biggr)^{\! j-1}
	-
	\frac{\Gamma(\alpha + n)}{n! \, \Gamma(\alpha)}.
\end{aligned}
\end{equation}
For the latter two terms, we have
\begin{align}
&\left(\! 1 + \frac{b}{a}\right)
\sum_{k=1}^{n} 
\binom{\alpha + n - 1}{n - k} \biggl(\frac{b}{a}\biggr)^{\! k-1}
+
\frac{\Gamma(\alpha + n)}{n! \, \Gamma(\alpha)}
\\
&=
\sum_{k=1}^{n} 
\binom{\alpha + n - 1}{n - k} \biggl(\frac{b}{a}\biggr)^{\! k-1}
+
\sum_{k=0}^{n} 
\binom{\alpha + n - 1}{n - k} \biggl(\frac{b}{a}\biggr)^{\! k}
\\
&=
\sum_{k=1}^{n} 
\biggl[
\binom{\alpha + n - 1}{n - k}
+
\binom{\alpha + n - 1}{n - k + 1}
\biggr]\biggl(\frac{b}{a}\biggr)^{\! k-1}
+
\biggl(\frac{b}{a}\biggr)^{\!\! n}
=
\sum_{k=1}^{n+1} 
\binom{\alpha + n}{n + 1 - k} \biggl(\frac{b}{a}\biggr)^{\! 
	k-1}\!\!.
\end{align}
Indeed, to obtain the second line we note that
$\frac{\Gamma(\alpha + n)}{n! \, \Gamma(\alpha)} 
= 
\binom{\alpha + n - 1}{n}$ by~\eqref{eq:frac-binom-coeff},
which is the term $k = 0$ of the second summation on the second line;
shifting its index of summation and splitting off the last term yields
the first expression on the third line.
The final step uses~\cite[Equation~(1.2.7)]{Olver2010} and the fact that 
$(\frac{b}{a})^n$ 
corresponds to the term $k = n + 1$ in the desired formula.
Putting the previous two displays together proves the induction step and 
thereby the lemma.
\end{proof}
\begin{remark}\label{rem:hypergeometric}
An alternative way to derive~\eqref{eq:nasty-integral} is by noting that 
the 
integral can be expressed in terms of a hypergeometric 
function~\cite[Equation~(15.6.1)]{Olver2010} to which one can apply the 
transformation formula~\cite[Equation~(15.8.2)]{Olver2010}. This results in 
a difference of two hypergeometric functions, whose 
definitions can be written out to respectively yield an infinite and a 
finite sum: The former is the fractional binomial expansion of 
$(a+b)^{\alpha + n - 1}$ and the latter consists of its first $n$ terms, 
and together this gives~\eqref{eq:nasty-integral}. In particular, we note 
that~\eqref{eq:nasty-integral} is formally equal to 
the tail of a fractional binomial series. The 
proof of Lemma~\ref{lem:hgf-integral} is more direct and avoids the need to 
address the convergence of an infinite series.
\end{remark}
\begin{proof}[Proof of Theorem~\ref{thm:extension}]
Fixing $t \in (t_0, \infty)$, the semigroup law implies
\begin{equation}\label{eq:Str-identity}
S(t-r) = S((t-t_0)(\tau + 1))S(t_0 - (t-t_0)\tau-r)
\end{equation}
for all $\tau \in (0, \infty)$ and $r \in (-\infty, t_0 - (t-t_0)\tau)$.
This identity,
followed by~\eqref{eq:semigroup-est}, H\"older's inequality
and equation~\eqref{eq:confHGF-incgamma} yields
\begin{align*}
&\frac{1}{\Gamma(s)}
\int_{0}^{\infty} 
\int_{-\infty}^{t_0 - (t-t_0)\tau}  
\biggl\| 
\frac{\tau^{-\{s\}}(t_0 - (t-t_0)\tau-r)^{\gamma-1}}{\tau+1} S(t-r) 
f(r)
\biggr\|_{X} 
\rd r 
\rd \tau
\\
&\quad \le 
M_0 \int_{0}^{\infty}
\frac{\tau^{-\{s\}}}{\tau + 1}e^{-w(t - t_0)(\tau + 1)}
\int_{-\infty}^{t_0 - (t-t_0)\tau}  
\| 
k_s(t_0 - (t-t_0)\tau -r) 
f(r)
\|_{X} 
\rd r 
\rd \tau
\\
&\quad \le
\frac{M_0 \pi }{\sin(\pi\{s\})}
\overline\Gamma(\{s\}, w(t-t_0))
\| k_s \|_{L^{p'}(0,\infty; \mathscr L(X))}\norm{f}{L^p(\R;X)} < \infty.
\end{align*}
This justifies the use of Fubini's theorem in the following:
\begin{align*}
&\int_0^\infty \frac{\tau^{-\{s\}}}{\tau + 1} S((t-t_0)(\tau + 1)) 
\fI^s f(t_0 - (t-t_0)\tau) \rd \tau
\\&=
\frac{1}{\Gamma(s)}
\int_0^\infty 
\int_{-\infty}^{t_0 - (t-t_0)\tau} 
\frac{\tau^{-\{s\}}(t_0 - (t-t_0)\tau-r)^{s-1}}{\tau+1} 
S(t - r)f(r)
\rd r
\rd \tau
\\&=
\frac{1}{\Gamma(s)}
\int_{-\infty}^{t_0} 
\biggl[
\int_0^{\frac{t_0 - r}{t - t_0}} 
\frac{\tau^{-\{s\}}(t_0 - r - (t-t_0)\tau)^{s-1}}{\tau+1}
\rd \tau
\biggr]
S(t - r)
f(r) \rd r,
\end{align*}
where we used~\eqref{eq:Str-identity} once more.
Lemma~\ref{lem:hgf-integral} and
equation~\eqref{eq:frac-binom-coeff} produce
\begin{equation}
\begin{aligned}
	&\frac{\sin(\pi\{s\})}{\pi}
	\int_0^{\frac{t_0 - r}{t - t_0}} 
	\frac{
		\tau^{-\{s\}}
		(t_0 - r - (t-t_0)\tau)^{\{s\} + \lfloor s \rfloor -1}
	}{\tau+1}
	\rd \tau
	\\
	&\qquad =
	(t-r)^{s-1}
	-
	\sum_{k=1}^{\lfloor s \rfloor} \frac{\Gamma(s)}{(\lfloor s \rfloor 
		- k)! \, \Gamma(\{s\} + k)}
	(t_0 - r)^{\lfloor s \rfloor - k} (t-t_0)^{k + \{s\} - 1}.
\end{aligned}
\end{equation}
The previous two displays and the identity 
$S(t - r) = S(t - t_0) S(t_0 - r)$ yield
\begin{equation}
\begin{aligned}
	&\frac{\sin(\pi\{s\})}{\pi}
	\int_0^\infty 
	\frac{\tau^{-\{s\}}}{\tau + 1} S((t-t_0)(\tau + 1)) 
	\fI^s f(t_0 - (t-t_0)\tau) 
	\rd \tau
	\\
	&\quad = 
	\begin{aligned}[t]
		&-
		\sum_{k=1}^{\lfloor s \rfloor} 
		\frac{(t-t_0)^{\{s\} + k - 1} 
			S(t-t_0)}{\Gamma(\{s\} + k)}
		\frac{1}{(\lfloor s \rfloor - k)!}
		\int_{-\infty}^{t_0} (t_0 - r)^{\lfloor s \rfloor - k} S(t_0-r) 
		f(r) \rd r
		\\
		&+ 
		\frac{1}{\Gamma(s)} \int_{-\infty}^{t_0} (t-r)^{s-1} 
		S(t-r) f(r) \rd r
	\end{aligned}
	\\
	&\quad= 
	-
	\sum_{k=1}^{\lfloor s \rfloor} 
	\frac{(t-t_0)^{\{s\} + k - 1} 
		S(t-t_0)}{\Gamma(\{s\} + k)}
	\fI^{\lfloor s \rfloor - k + 1} f(t_0)
	+ 
	\fI^s \widetilde{f}(t)
\end{aligned}
\end{equation}
which is precisely~\eqref{eq:Is-ftilde}. The final assertion follows from 
Proposition~\ref{prop:ID-DI-s}\ref{prop:ID-DI-s:ID}.
\end{proof}

\section{Comparison to Riemann--Liouville and Caputo Cauchy problems}
\label{section:RL-C-ACP}
%
In this section, we compare the Dirichlet 
problem~\eqref{eq:natural-Dirichlet-Ds} 
to fractional-order abstract Cauchy problems of the form
\begin{equation}
(\partial_t + A)^s u(t) = 0, \quad t \in (t_0, \infty),
\end{equation}
augmented with initial conditions which depend on the interpretation of 
the abstract space--time operator $(\partial_t + A)^s$,
acting on functions $u \from J \to X$ with $J = (t_0, \infty)$ for 
$t_0 \in \R$ (instead of $J = \R$ as in the previous sections). More 
precisely, we will interpret 
$(\partial_t + A)^s$ as a Riemann--Liouville or Caputo type fractional 
parabolic derivative, respectively, on $L^p(J; X)$ 
for $p \in [1,\infty]$, and determine the corresponding initial conditions 
and mild solution formulae, see Definitions~\ref{def:RL-ACP} 
and~\ref{def:C-ACP}.

For fractional time derivatives $\partial_t^s$, i.e., the case $A = 0$, 
the resulting solution concepts are well-known and commonly studied, see for 
instance~\cite[Chapter~3]{KST2006}. In this case, the analog 
to~\eqref{eq:def-Igamma} has less 
favorable 
mapping properties~\cite[Section~2.3]{KST2006} and the well-posedness 
of~\eqref{eq:mildsol-Dirichlet} is less clear.
We will show that, as in the case $A = 0$,
the lowest-order term of the solution to the Riemann--Liouville type initial 
value problem has a singularity at $t_0$ in 
general, whereas 
the Caputo initial value problem yields the same solution for any two 
$s_1, s_2 \in (n, n+1)$ for $n \in \N_0$.
In contrast, the solution from 
Definition~\ref{def:mildsol-Dirichlet} is continuous at $t_0$ under mild 
assumptions on $g$ or $(S(t))_{t\ge0}$ and changes for all choices of $s \in 
(0,\infty)$.

Firstly, let us recall the Riemann--Liouville type fractional 
parabolic 
integral $\fI_{t_0}^s$ on $L^p(J; X)$ defined by~\eqref{eq:def-RL-int}.
Then, the Riemann--Liouville and Caputo type fractional 
parabolic \emph{derivatives} are respectively defined by
\[
\mathfrak D^s_{\mathrm{RL}} \coloneqq (\partial_t + A)^{\lceil s \rceil} 
\mathfrak 
I_{t_0}^{\lceil s \rceil - s}
\quad \text{and}\quad 
\mathfrak D^s_{\mathrm{C}} \coloneqq 
\mathfrak 
I_{t_0}^{\lceil s \rceil - s} (\partial_t + A)^{\lceil s \rceil}
\]
on their maximal domains.
In order to derive mild solution formulae for $L^p$-solutions to the equations $\fD^s_{\mathrm{RL}} u = 0$ and $\fD^s_{\mathrm{C}} u = 0$, we proceed analogously to~\cite[Chapter~3]{KST2006} and express $\fI_{t_0}^s \fD^s_{\mathrm{RL}} u$ and $\fI_{t_0}^s \fD^s_{\mathrm{C}} u$ in terms of initial data from $u$ (compare with Proposition~\ref{prop:InDn} and Theorem~\ref{thm:extension}), so that applying $\fI^s_{t_0}$ on both sides of the equations motivates the definitions.
The integer-order case $s = n 
\in \N$, 
where 
\[
\fD^n = \fD_{\mathrm{RL}}^n = \fD_{\mathrm{C}}^n = (\partial_t + A)^n,
\]
was treated in Section~\ref{sec:Lp-implies-mild:integer-order}.
From these results, we derive the following proposition regarding 
$\fD_{\mathrm{RL}}^s$ and
$\fD_{\mathrm{C}}^s$ for fractional $s \in (0,\infty)\setminus\N$:
\begin{proposition}\label{prop:Int-Deriv-gamma-RL}
Let Assumption~\ref{ass:bdd-semigroup}\ref{ass:bdd-semigroup:bdd}
be satisfied. If $s \in (0,\infty) \setminus \N$, $t_0 \in \R$ and $p \in 
[1,\infty]$ and $u \in 
\dom{\fD_{\mathrm{RL}}^s}$, then for almost all $t \in J \coloneqq 
(t_0,\infty)$:
\begin{equation}
\begin{aligned}
	\mathfrak I_{t_0}^{s}  \mathfrak D_{\mathrm{RL}}^s u(t)
	&= 
	u(t)
	-
	\frac{(t-t_0)^{\{s\} - 1}}{\Gamma(\{s\})} 
	S(t - t_0) 
	\fI_{\mathrm{RL}}^{1 - \{s\}} u(t_0)
	\\&\quad -
	\sum_{k=1}^{\lfloor s \rfloor} 
	\frac{(t - t_0)^{k + \{s\} - 1}}{\Gamma(k + \{s\})} 
	S(t - t_0) 
	\fD_{\mathrm{RL}}^{k + \{s\} - 1} u(t_0).
\end{aligned}
\end{equation}
If
$u \in \dom{\fD_{\mathrm{C}}^s}$ is such that
$u \in C^j(\clos J; \mathsf D(A^{n-1-j}))$
for all $j \in \{0,\dots, n-1\}$, then we have for almost all $t \in J 
\coloneqq (t_0,\infty)$:
\begin{equation}
\mathfrak I_{t_0}^{s}  \mathfrak D_{\mathrm{C}}^s u(t)
= 
u(t)
-
\sum_{k=0}^{\lfloor s \rfloor} \frac{(t - t_0)^k}{k!} 
\overline\Gamma(\lceil s \rceil-k, (t - t_0)A)u^{(k)}(t_0)
\quad 
\text{a.e.}
\end{equation}
\end{proposition}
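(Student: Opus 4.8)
The plan is to reduce both identities, with $n \coloneqq \lceil s \rceil$ throughout, to the integer-order formula
\[
  \fI_{t_0}^{n}(\partial_t + A)^{n} v
  = v - \sum_{k=0}^{n-1}\frac{(\,\cdot\, - t_0)^k}{k!}\,S(\,\cdot\, - t_0)\,[(\partial_t + A)^k v](t_0),
  \qquad v \in \dom{(\partial_t + A)^{n}},
\]
which follows by iterating the first-order variation-of-constants identity $\fI_{t_0}^1(\partial_t + A)w = w - S(\,\cdot\, - t_0)w(t_0)$ from Appendix~\ref{app:1st-order-ACP} (Proposition~\ref{prop:ACP-strong-vs-mild}) together with $\fI_{t_0}^{j}(k_1(\,\cdot\, - t_0)\otimes x) = k_{j+1}(\,\cdot\, - t_0)\otimes x$; recall that $s \notin \N$ forces $\lceil s \rceil - s = 1 - \{s\}$ and $\lceil s \rceil - 1 = \lfloor s \rfloor$. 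Throughout I will use the semigroup law $\fI_{t_0}^{\alpha}\fI_{t_0}^{\beta} = \fI_{t_0}^{\alpha + \beta}$, the identity $\fI_{t_0}^{\beta}(k_{\alpha}(\,\cdot\, - t_0)\otimes x) = k_{\alpha + \beta}(\,\cdot\, - t_0)\otimes x$ for $\alpha > 0$, $\beta \ge 0$, $x \in X$, and the injectivity of $\fI_{t_0}^{1-\{s\}}$; these are proved exactly as in Propositions~\ref{prop:frac-par-int-props}\ref{prop:frac-par-int-props:semigroup}--\ref{prop:frac-par-int-props:invariant-funcs} and~\ref{prop:ID-DI-s}\ref{prop:ID-DI-s:DI} (Fubini's theorem and the beta integral), and stay valid under Assumption~\ref{ass:bdd-semigroup}\ref{ass:bdd-semigroup:bdd} because on $J = (t_0,\infty)$ the Bochner integrals defining $\fI_{t_0}^{\alpha}$ run over a bounded interval. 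All traces ``$(\,\cdot\,)(t_0)$'' are understood as a.e.\ limits as $t \downarrow t_0$.

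For the Caputo identity I unfold $\fD^s_{\mathrm{C}} = \fI_{t_0}^{1-\{s\}}(\partial_t + A)^{n}$ and apply the semigroup law to obtain $\fI_{t_0}^{s}\fD^s_{\mathrm{C}}u = \fI_{t_0}^{s + 1 - \{s\}}(\partial_t + A)^{n}u = \fI_{t_0}^{n}(\partial_t + A)^{n}u$, to which the integer-order formula applies. It then remains to rewrite the correction sum: using the binomial expansion $[(\partial_t + A)^k u](t_0) = \sum_{j=0}^{k}\binom{k}{j}A^{k-j}u^{(j)}(t_0)$ — which is licensed and evaluable at $t_0$ precisely because $u \in C^{j}(\clos J; \dom{A^{n-1-j}})$ for all $j \in \{0,\dots,n-1\}$ — together with the commutation of $S(\,\cdot\,)$ with powers of $A$ on these domains, then interchanging the two summations and substituting $m \coloneqq k - j$, one arrives at
\[
  \sum_{k=0}^{n-1}\frac{(t - t_0)^k}{k!}S(t - t_0)[(\partial_t + A)^k u](t_0)
  = \sum_{j=0}^{n-1}\frac{(t - t_0)^j}{j!}\Bigl(\sum_{m=0}^{n-1-j}\frac{(t - t_0)^m}{m!}A^{m}S(t - t_0)\Bigr)u^{(j)}(t_0).
\]
The inner operator is exactly $\overline\Gamma(n - j, (t - t_0)A)$ — the integer-order instance of~\eqref{eq:def-incgamma-s1infty}, which in view of~\eqref{eq:incgamma-recurrence} equals $\sum_{m=0}^{N-1}\tfrac{t^{m}}{m!}A^{m}S(t)$ for $N \in \N$ and $t > 0$. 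Since $n - j$ runs through $\{1,\dots,n\}$ and $n - 1 = \lfloor s \rfloor$, renaming $j$ as $k$ yields the asserted formula.

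For the Riemann--Liouville identity I set $v \coloneqq \fI_{t_0}^{1-\{s\}}u$, so that $\fD^s_{\mathrm{RL}}u = (\partial_t + A)^{n}v$ with $v \in \dom{(\partial_t + A)^{n}}$ (hence the traces $[(\partial_t + A)^k v](t_0)$, $0 \le k \le n-1$, are well defined). Applying $\fI_{t_0}^{1-\{s\}}$ and the semigroup law twice,
\[
  \fI_{t_0}^{1-\{s\}}\bigl(\fI_{t_0}^{s}\fD^s_{\mathrm{RL}}u\bigr)
  = \fI_{t_0}^{n}(\partial_t + A)^{n}v
  = v - \sum_{k=0}^{\lfloor s \rfloor} k_{k+1}(\,\cdot\, - t_0)\otimes[(\partial_t + A)^k v](t_0)
\]
by the integer-order formula. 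Now $v = \fI_{t_0}^{1-\{s\}}u$, whereas $[(\partial_t + A)^{0}v](t_0) = \fI_{t_0}^{1-\{s\}}u(t_0)$ and, reading off the definition of $\fD^{\beta}_{\mathrm{RL}}$ at $\beta = k + \{s\} - 1 \in (k-1,k)$ for $1 \le k \le \lfloor s \rfloor$ (so $\lceil \beta \rceil = k$ and $\lceil \beta \rceil - \beta = 1 - \{s\}$), one finds $[(\partial_t + A)^k v](t_0) = [(\partial_t + A)^k \fI_{t_0}^{1-\{s\}}u](t_0) = \fD^{k+\{s\}-1}_{\mathrm{RL}}u(t_0)$. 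Since $\fI_{t_0}^{1-\{s\}}(k_{k+\{s\}}(\,\cdot\, - t_0)\otimes y) = k_{k+1}(\,\cdot\, - t_0)\otimes y$, the right-hand side above equals $\fI_{t_0}^{1-\{s\}}$ applied to $u - \sum_{k=0}^{\lfloor s \rfloor} k_{k+\{s\}}(\,\cdot\, - t_0)\otimes[(\partial_t + A)^k v](t_0)$, so injectivity of $\fI_{t_0}^{1-\{s\}}$ concludes the proof once $k_{k+\{s\}}(t - t_0)y = \tfrac{(t - t_0)^{k+\{s\}-1}}{\Gamma(k+\{s\})}S(t - t_0)y$ is substituted to match the stated coefficients (the $k = 0$ term giving the $\fI_{t_0}^{1-\{s\}}u(t_0)$ contribution and the $k \ge 1$ terms the $\fD^{k+\{s\}-1}_{\mathrm{RL}}u(t_0)$ ones).

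I expect the main obstacle to lie in the two steps that go beyond formal manipulation. First, the operational calculus of $(\fI_{t_0}^{\alpha})_{\alpha \ge 0}$ — the semigroup law, the action on $k_{\alpha}(\,\cdot\, - t_0)\otimes x$, and above all the injectivity of $\fI_{t_0}^{1-\{s\}}$ — must be established under Assumption~\ref{ass:bdd-semigroup}\ref{ass:bdd-semigroup:bdd} rather than~\ref{ass:bdd-semigroup:exp}; for injectivity one reduces, via the semigroup law, to that of $\fI_{t_0}^{1}$, which follows by taking Laplace transforms in $t$ (admissible since $u \in L^{p}(J;X)$ and $\norm{S(\,\cdot\,)}{\LO(X)}$ is bounded) and using that $(\lambda + A)^{-1}$ is injective. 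Second, the Caputo bookkeeping has to be matched carefully against the recurrence~\eqref{eq:incgamma-recurrence} — including the convention $\overline\Gamma(0, tA) = 0$ for $t > 0$ forced by letting $\alpha \in \N$ in~\eqref{eq:def-incgamma-s1infty} — and every evaluation at $t_0$ must be justified as a genuine limit under the stated $C^{j}$-regularity hypotheses.
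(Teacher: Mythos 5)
Your proof is correct and follows essentially the same route as the paper: both parts are reduced, via the semigroup law $\fI_{t_0}^{\lceil s\rceil-s}\fI_{t_0}^{s}=\fI_{t_0}^{\lceil s\rceil}$, to the integer-order identity of Proposition~\ref{prop:InDn}, with the Riemann--Liouville case concluded by injectivity of $\fI_{t_0}^{1-\{s\}}$ and the Caputo case by the binomial rearrangement into incomplete gamma operators described after Proposition~\ref{prop:InDn}. Your explicit attention to the fact that the auxiliary results are stated under Assumption~\ref{ass:bdd-semigroup}\ref{ass:bdd-semigroup:exp} but remain valid for the truncated operators $\fI_{t_0}^{\alpha}$ under Assumption~\ref{ass:bdd-semigroup}\ref{ass:bdd-semigroup:bdd} is a welcome refinement that the paper leaves implicit.
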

\begin{proof}
For the sake of notational convenience we only present the case $t_0 = 0$.
The definition of $\fD_{\mathrm{RL}}^s$, along with 
Propositions~\ref{prop:frac-par-int-props}\ref{prop:frac-par-int-props:semigroup}--\ref{prop:frac-par-int-props:invariant-funcs}
and~\ref{prop:InDn}, yields
\begin{align}
\fI^{\lceil s \rceil - s}_0 \fI^s_0 \fD_{\mathrm{RL}}^s u
&=
\fI^{\lceil s \rceil}_0 \fD_{\mathrm{RL}}^s u
=
\fI^{\lceil s \rceil}_0 (\partial_t + A)^{\lceil s \rceil} 
\fI^{\lceil	s \rceil - s}_0 u
\\
&=
\fI^{\lceil s 
	\rceil - s}_0 u
-
\sum_{k=0}^{n-1} 
\frac{(\,\cdot\,)^k}{k!} S(\,\cdot\,) [(\partial_t + A)^k \fI^{\lceil s 
	\rceil - s}_0 u](0)
\\
&=
\fI^{\lceil s 
	\rceil - s}_0 
\biggl[
u
-
\sum_{k=0}^{n-1} 
\frac{(\,\cdot\,)^{k-\lceil s \rceil + s}
	S(\,\cdot\,)}{\Gamma(k + s -\lceil s 
	\rceil + 1)} 
[(\partial_t + A)^k \fI^{\lceil s 
	\rceil - s}_0 u](0)\biggr]
\end{align}
for any $u \in \dom{\fD_{\mathrm{RL}}^s}$.
The first assertion then follows from Proposition~\ref{prop:ID-DI-s} and 
the injectivity of $\fI^{\lceil s \rceil -s }_0$.

If $u \in \dom{\fD_{\mathrm{C}}^s}$, combining the definition with 
Proposition~\ref{prop:frac-par-int-props}\ref{prop:frac-par-int-props:semigroup}
produces
\begin{equation}
\mathfrak I^{s}_0 \mathfrak D_{\mathrm{C}}^s u
= 
\mathfrak I^{s}_0 \mathfrak I^{\lceil s \rceil - s}_0 
(\partial_t + A)^{\lceil s 
	\rceil} u
=
\mathfrak I^{\lceil s \rceil}_0 
(\partial_t + A)^{\lceil s 
	\rceil} u,
\end{equation}
so that the result follows from Proposition~\ref{prop:InDn}
and the discussion below it, in particular 
equation~\eqref{eq:Int0-dtAn-Gamma}.
\end{proof}

Note that
$\fI_{0}^{1 - \{s\}} u$ need not vanish at $t_0 = 0$. Indeed, even if it 
is continuous, it may not 
satisfy~\eqref{eq:def-RL-int} pointwise, as evidenced by the example 
$u \coloneqq k_{\{s\}} \otimes x$ 
for $p \in [1, \frac{1}{\{s\}-1})$ and $x \in \clos{\dom A} \setminus \{0\}$, 
see 
Proposition~\ref{prop:frac-par-int-props}\ref{prop:frac-par-int-props:invariant-funcs}.

Proposition~\ref{prop:Int-Deriv-gamma-RL} motivates the following definition of 
the
Riemann--Liouville fractional abstract Cauchy type problem and its 
corresponding solution.

\begin{definition}\label{def:RL-ACP}
Let Assumption~\ref{ass:bdd-semigroup}\ref{ass:bdd-semigroup:bdd}
be satisfied. For any $s \in (0,\infty) \setminus \N$ and $t_0 \in \R$,
the mild solution to the Riemann--Liouville abstract Cauchy type 
problem 
\begin{equation}\label{eq:RL-ACP}
\left\lbrace 
\begin{aligned}
	\fD^{s}_{\mathrm{RL}} u(t) &= 0,  &&t \in J \coloneqq (t_0, 
	\infty), \\
	\fI^{1 - \{s\}}_{t_0} u(t_0) 
	&= x_0 \in X, && \\
	\fD^{k + \{s\} - 1}_{\mathrm{RL}} u(t_0) &= x_k \in 
	\clos{\dom{A}}, &&
	k \in 
	\{1,\dots,\lfloor s \rfloor\},
\end{aligned}
\right.
\end{equation}
is the function
$u_{\mathrm{RL}} \in C(J; X)$ defined by
\begin{equation}\label{eq:uRL}
u_{\mathrm{RL}}(t)
\coloneqq
\sum_{k=0}^{\lfloor s \rfloor} 
\frac{(t-t_0)^{k + \{s\} - 1}}{\Gamma(k + \{s\})} 
S(t - t_0) x_k, 
\quad 
t \in J.
\end{equation}
\end{definition}
Compared with
Definition~\ref{def:mildsol-Dirichlet}, we first note that the terms $k \in 
\{1, \dots, \lfloor s \rfloor \}$ in~\eqref{eq:uRL} are almost identical to 
those of~\eqref{eq:Es}, up to the difference between taking Riemann--Liouville 
type fractional parabolic derivatives of the function $u$ defined on $J$ and 
Weyl type derivatives of $g$ defined on $\R\setminus J$. The remaining term, on 
the other hand, differs significantly. In~\eqref{eq:RL-ACP}, we see that 
$x_0$ is the prescribed value of $\fI^{1 - \{s\}}_{t_0} u$ at $t_0$ and
$u_{\mathrm{RL}}$ is continuous at $t_0$ if and only if $x_0 = 0$, in view of 
the singularity occurring there for $x_0 \neq 0$. In contrast, the 
solution to~\eqref{eq:natural-Dirichlet-Ds} given by 
Definition~\ref{def:mildsol-Dirichlet} is bounded by 
Proposition~\ref{prop:mildsol-Dirichlet-bound-cont}, does in fact prescribe the 
value $u(t_0) = g(t_0)$ and is continuous on $\bbR$ under some further 
regularity assumptions.

The following definition of a Caputo type initial value 
problem and corresponding solution can also be derived from 
Proposition~\ref{prop:Int-Deriv-gamma-RL}:
\begin{definition}\label{def:C-ACP}
Let Assumption~\ref{ass:bdd-semigroup}\ref{ass:bdd-semigroup:bdd}
be satisfied. For any $s \in (0,\infty) \setminus \N$ and $t_0 \in \R$,
the mild solution to the Caputo abstract Cauchy problem
\begin{equation}\label{eq:C-fracACP}
\left\lbrace 
\begin{aligned}
	\fD^s_{\mathrm{C}} u(t) &= 0, &t &\in J \coloneqq (t_0, \infty), 
	\\
	u^{(k)}(t_0) &= x_k \in \dom{A^{\lfloor s \rfloor - k}}, 
	&
	k &\in \{0,\dots,\lfloor s \rfloor\}.
\end{aligned}
\right.
\tag{C-ACP$_s$}
\end{equation}
is the function
$u_{\mathrm{C}} \in C(J; X)$ defined by
\begin{equation}
u_{\mathrm{C}}(t)
\coloneqq
\sum_{k=0}^{\lfloor s \rfloor} \frac{(t - t_0)^k}{k!} 
\overline\Gamma(\lceil s \rceil-k, (t - t_0)A)x_k, 
\quad 
t \in J.
\end{equation}
\end{definition}
Note that this definition has the same form as 
the integer-order abstract Cauchy problem from 
Definition~\ref{def:mildsol-Dirichlet}, i.e., formula~\eqref{eq:Es-n}. 
Analogously, for sufficiently regular 
$x_k$ 
or $(S(t))_{t\ge0}$, this solution allows for the specification of the 
value of $u_{\mathrm{C}}(t_0)$.
However, in contrast to the  solution in 
the sense of Definition~\ref{def:mildsol-Dirichlet}, we observe that the form 
of $u_{\mathrm{C}}$ only changes ``discretely in $s$,'' i.e., the solutions for 
any two $s_1, s_2 \in (n, n+1)$, $n \in \N_0$ are given by the same formula.

\section*{Acknowledgments}

The author acknowledges helpful discussions with Kristin Kirchner and Wolter Groenevelt which contributed to the formulations of Theorem~\ref{thm:extension} and Remark~\ref{rem:hypergeometric}, respectively. 
Moreover, the author thanks Mark Veraar, Jan van Neerven and an anonymous reviewer for carefully reading the manuscript and providing valuable comments.

\bibliographystyle{siam}
\bibliography{fractional-parabolic-dirichlet.bib}

\end{document}